\newtheorem{lemma}{Lemma}[section]
\newtheorem{proposition}[lemma]{Proposition}
\newtheorem{remark}[lemma]{Remark}
\newtheorem{theorem}[lemma]{Theorem}
\newtheorem{corollary}[lemma]{Corollary}
\newcommand{\cA}{\mathcal{A}}
\newcommand{\cS}{\mathcal{S}}
\newcommand{\cD}{\mathcal{D}}
\newcommand{\cH}{\mathcal{H}}
\newcommand{\R}{\mathbb{R}}
\newcommand{\Z}{\mathbb{Z}}
\newcommand{\CP}{\mathbb{C}\mathbb{P}}
\newcommand{\RP}{\mathbb{R}\mathbb{P}}
\newcommand{\bK}{{\mathbb{K}}}
\newcommand{\Id}{\mathds{1}}
\DeclareMathOperator{\area}{Area_{_\omega}}
\DeclareMathOperator{\Spec}{\mathrm{Spec}}
\newenvironment{Properties}
{\begin{list}{}{
\setlength{\topsep}{6pt}%
\setlength{\itemsep}{4pt}%
\setlength{\labelsep}{0pt}%
\setlength{\leftmargin}{0pt}%
\setlength{\labelwidth}{0pt}%
\setlength{\listparindent}{0pt}}%
\setlength{\parskip}{0pt}%
}
{\end{list}}
\title{Pseudo-holomorphic triangles and the median quasi-state}
\author{Pazit Haim-Kislev, Michael Khanevsky, Asaf Kislev, Daniel Rosen}
\date{}
\begin{document}

\maketitle

\begin{abstract}
    We prove that the minimal area of a holomorphic triangle whose boundary lies on the union of any three Lagrangian submanifolds is bounded from above by the Lagrangian spectral norm of any pair taken out of the three. We show a relation between this result and the median quasi-state on the 2-sphere. The median quasi-state gives rise to a measure of Poisson non-commutativity of any pair of functions. We answer a question of Entov--Polterovich--Zapolsky by giving a sharp upper bound on the ratio between this measure and the uniform norm of the Poisson bracket of the pair. The sharpness of this bound also implies a lower bound on the defect of the Calabi quasi-morphism.
\end{abstract}

\section{Introduction and main results}

\subsection{Bounds on holomorphic triangles}

The theory of $J$-holomorphic curves was introduced into symplectic geometry in a seminal paper of Gromov in 1985 \cite{gromov}. Since then it has profoundly influenced the study of symplectic geometry with many key results being dependent upon it, including the creation of Floer homology. We refer to \cite{McDuffSalamonJHol} for a thorough review on $J$-holomorphic curves.
In this work we study $J$-holomorphic triangles: $J$ holomorphic curves with boundaries on three Lagrangian submanifolds.
We introduce a new invariant of a pair of Lagrangian submanifolds based on the largest minimal symplectic area of a $J$-holomorphic triangle over all choices of a third Lagrangian submanifold. We show that this invariant is bounded from above by the Lagrangian spectral norm, and discuss applications.

First let us briefly review the settings in which we will be working.
Let $(M^{2n},\omega)$ be a closed symplectic manifold.
Fix a closed Lagrangian submanifold $L \subset M$.
Recall that a Lagrangian submanifold $L$ is said to be monotone (see e.g. \cite{McDuffSalamon}) if the symplectic area homomorphism
\[ \omega : \pi_2(M,L) \to \R,\]
and the Maslov index
\[ \mu : \pi_2(M,L) \to \Z\]
satisfy that there exists a positive constant $\tau$ so that $\omega(A) = \tau \mu(A)$ for each $A \in \pi_2(M,L)$.
The positive generator of $\text{Im}(\mu)$ is called the minimal Maslov number $N_L$ of $L$, we assume throughout the paper that $N_L \geq 2$.

The Lagrangian spectral invariant associated to $L$, following \cite{LeclercqZapolsky}, is a function
\[ c : QH_*(L) \times C^\infty(M\times [0,1]) \to \R \cup \{-\infty\}.\]
We refer to \cite{LeclercqZapolsky} for many properties and applications. We will briefly recall the definition and some of the properties in Section \ref{preliminariesSection} below.
% In this paper for simplicity we only work with $\Z_2$ coefficients.
The ``spectral norm" associated to an idempotent $a \in QH_*(L)$ is defined by
\[ \nu(L;a,H) = c(L;a,H) + c(L;a,\overline{H}),\]
where $\overline{H}_t := -H_{1-t}$. 
We define the spectral norm of a pair of Hamiltonian isotopic Lagrangian submanifolds by
\[ \nu(L_1,L_2;a) := \inf_H \{\nu(L_1;a,H)\},\]
where the infimum is taken over Hamilfonian functions $H$ with $\phi_H^1(L_2) = L_1$, where $\phi_H^1$ is the time-1-map.
The Lagrangian spectral norm was first introduced by Viterbo in \cite{Viterbo-specGF} for the case where $L_1$ is the zero-section of a cotangent bundle.
We remark that the spectral norm is non-negative whenever $a$ is an idempotent (see e.g. the proof of Theorem 35 in \cite{LeclercqZapolsky}), and moreover, it is also non-degenerate. We refer to \cite{KislevShelukhin} for the proof in the most general case, as well as for a review of the literature. 

Let us fix a regular compatible almost complex structure $J$.
Let $L_1, L_2, L_3 \subset M$ be closed Hamiltonian isotopic Lagrangian submanifolds and intersecting transversely.
Denote by $\cD(L_1,L_2,L_3;J)$ the set of immersed $J$-holomorphic triangles with edges lying on $L_1,L_2,L_3$.
Denote 
\begin{equation}
    \cS(L_1,L_2) = \sup_{L_3,J} \inf_{\Delta \in \cD(L_1,L_2,L_3;J)} \area(\Delta) ,
\end{equation}
where the supremum is taken over all Lagrangian submanifolds $L_3$ Hamiltonian isotopic to $L_1$ that intersects $L_1,L_2$ transversely, and $J$ a regular compatible almost complex structure.

We are now ready to state our first result.
\begin{theorem}
\label{minTriangle}
Let $L_1,L_2 \subset M$ be closed, monotone and Hamiltonian isotopic Lagrangian submanifolds intersecting transversely.
For any idempotent $a \in QH_*(L_1)$,
\[ \cS(L_1,L_2) \leq \frac{1}{2} \nu(L_1,L_2;a).\]
\end{theorem}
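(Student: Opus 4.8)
The plan is to bound $\mathcal{S}(L_1,L_2)$ by relating the area of a holomorphic triangle to the spectral norm via a continuation/Floer-theoretic argument. First I would fix an arbitrary Hamiltonian $H$ with $\phi_H^1(L_2)=L_1$ and an arbitrary third Lagrangian $L_3$ Hamiltonian isotopic to $L_1$, together with a regular $J$; the goal is to produce a $J$-holomorphic triangle $\Delta\in\mathcal{D}(L_1,L_2,L_3;J)$ with $\area(\Delta)\le\tfrac12\nu(L_1;a,H)$, and then take suprema over $L_3,J$ and infimum over $H$. The natural mechanism is the triangle product (pair-of-pants / Donaldson product) in Lagrangian Floer homology: the counts of $J$-holomorphic triangles with boundary on $(L_1,L_2,L_3)$ define a product $HF_*(L_1,L_2)\otimes HF_*(L_2,L_3)\to HF_*(L_1,L_3)$, and the relevant algebraic identity is that the continuation elements (the images of the fundamental/idempotent class under quasi-isomorphisms $HF_*(L_1,L_2)\cong QH_*(L_1)$, etc., induced by the Hamiltonian isotopies) multiply to the continuation element for $(L_1,L_3)$. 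Since the product is nontrivial on these classes, there must exist at least one triangle contributing to it.

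The key step is the area/energy estimate. Each Hamiltonian isotopy $L_i\rightsquigarrow L_j$ comes with a continuation map whose ``action shift'' is controlled by the spectral invariants $c(L_1;a,H)$ and $c(L_1;a,\overline H)$. Writing the Floer equation for the triangle with the appropriate Hamiltonian perturbations turned on along the strip-like ends and using the standard energy identity, the symplectic area of any triangle $\Delta$ counted by the product satisfies $\area(\Delta)=$ (action of output) $-$ (action of inputs) $+$ (contributions from the perturbation Hamiltonians). Because the idempotent $a$ is a unit-type class, we can arrange the two input classes to be the continuation element of $a$ and the unit, so that the action of the output class is bounded above by something like $c(L_1;a,H)+c(L_1;a,\overline H)=\nu(L_1;a,H)$, while the input actions are $\ge 0$; the factor $\tfrac12$ should come from the fact that only ``half'' of the round-trip $L_1\to L_2\to L_1$ is traversed by the triangle, i.e. the triangle sees one copy of $H$ rather than the full concatenation $H\#\overline H$. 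Making this bookkeeping precise — choosing the perturbation data so that the Floer strips at the three ends are genuinely $J$-holomorphic after removing the Hamiltonian term, and tracking all the action shifts with correct signs — is where the real work lies.

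The main obstacle I expect is precisely this action-accounting together with the transition between the ``unperturbed'' triangles counted in $\mathcal{D}(L_1,L_2,L_3;J)$ and the Hamiltonian-perturbed triangles that naturally appear in the Floer-theoretic product. One must move the perturbation Hamiltonians from the ``triangle'' region to the strip-like ends (a standard but delicate homotopy-of-data argument, using that moduli spaces of perturbed and unperturbed triangles are cobordant and that area is continuous), and ensure the monotonicity hypothesis and $N_L\ge 2$ rule out disk/sphere bubbling that could otherwise destroy the compactness needed to guarantee the existence of an honest triangle. A secondary subtlety is that $\mathcal{S}$ involves a supremum over $L_3$ and $J$, so the estimate must be uniform in these choices — but since the bound $\tfrac12\nu(L_1;a,H)$ does not involve $L_3$ or $J$ at all, once the per-$(L_3,J)$ statement is established the suprema and the infimum over $H$ follow immediately, yielding $\mathcal{S}(L_1,L_2)\le\tfrac12\inf_H\nu(L_1;a,H)=\tfrac12\nu(L_1,L_2;a)$.
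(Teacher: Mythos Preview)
Your framework is right: the triangle product on the idempotent class forces the existence of a $J$-holomorphic triangle, and its area is bounded by an action difference. (Note that the inequality runs the other way than you wrote: area $=$ sum of input actions minus output action, so one needs the \emph{inputs} bounded above by spectral invariants and the \emph{output} bounded below, the latter coming from $a*a=a$ and the definition of $c$.) The natural bound one obtains this way is
\[
A \;:=\; c(L_1;a,H_{12}) + c(L_1;a,H_{23}\circ\phi_{21}^1) - c(L_1;a,H_{13}),
\]
which involves all three Hamiltonians and has no evident reason to be at most $\tfrac12\,\nu(L_1;a,H_{12})$. Your heuristic that ``only half of the round-trip $L_1\to L_2\to L_1$ is traversed'' does not correspond to any step of the actual argument, and I do not see how to make it rigorous.

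The paper's mechanism for the factor $\tfrac12$ is a symmetry trick. Write $B$ for the analogous quantity obtained by swapping the roles of $L_1$ and $L_2$ (so the base Lagrangian is $L_2$ and the ordering is $(L_2,L_1,L_3)$). Using symplectic invariance $c(L_2;(\phi_{21}^1)_*a,\,H\circ\phi_{21}^1)=c(L_1;a,H)$ together with the commutativity of the diagram $\{H_{ij}\}$, the $H_{13}$- and $H_{23}$-terms cancel in pairs, and one computes
\[
A+B \;=\; c(L_1;a,H_{12})+c(L_1;a,\overline{H}_{12}) \;=\; \nu(L_1;a,H_{12}).
\]
Hence $\min(A,B)\le\tfrac12\,\nu$, and since $\cD(L_1,L_2,L_3)=\cD(L_2,L_1,L_3)$, whichever ordering realizes the minimum still produces a triangle in the required moduli space. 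This averaging step is the real content of the $\tfrac12$; without it your outline would only yield $\cS(L_1,L_2)\le\nu(L_1,L_2;a)$ at best.

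A smaller remark: the passage between unperturbed $J$-holomorphic triangles (those counted in $\cS$) and Hamiltonian-perturbed triangles (those appearing in the Floer product) is not done via a homotopy-of-data/cobordism argument as you suggest. Instead the paper sets up an explicit, action-preserving change of variables $u(z)\mapsto \phi_z\,u(z)$ (with $\phi_z$ assembled from the flows $\phi_{ij}^t$ along strip-like ends) that identifies the two moduli problems on the nose; this is developed in the preliminaries as the equivalence between the ``Lagrangian intersections'' and ``Hamiltonian chords'' models of Lagrangian Floer theory.
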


\begin{remark}
The geometric meaning behind Theorem \ref{minTriangle} is as follows.
For every three Hamiltonian isotopic Lagrangians as above, there exists a pseudo-holomorphic triangle between them whose area is at most half of the spectral norm of any Hamiltonian diffeomorphism $\phi$, which maps one of the Lagrangians to another. 
\end{remark}

\begin{remark}
    Theorem \ref{minTriangle} gives a different proof for the fact that in our settings $\nu(L_1,L_2,[L_1])$ is non-degenerate. 
    Indeed, from transversality and compactness, the number of intersection points is finite. Hence for a choice of $L_3$ such that the intersections are transverse and there are no triple intersection points, the number of $J$-holomorphic triangles (up to recappings) is finite, and the smallest area of a $J$-holomorphic triangle has positive area.
\end{remark}

\vspace{10pt}

Although it is in general very hard to compute $\nu(L_1,L_2;a)$, in some cases there exists a global upper bound. In particular, following \cite{KislevShelukhin}, in $M = \CP^n$, and $L = \RP^n$, by taking $a = [L]$ one has
\[ \nu(L_1,L_2;a) \leq \frac{n}{2n+2}.\]

\begin{corollary}
Suppose that for $a \in QH_*(L)$ there is a global bound $\nu(L;a,\cdot)\leq C(a,L)$. Then for any $L_1, L_2$ Hamiltonian isotopic to $L$ one has $\cS(L_1,L_2) \leq \frac{1}{2} C(a,L)$.

In particular, in $M = \CP^n$, and $L = \RP^n$, when taking $a = [L]$ one gets
\[ \cS(L_1,L_2) \leq \frac{n}{4n+4}.\]
\end{corollary}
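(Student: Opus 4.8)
The plan is to obtain the statement as an immediate consequence of Theorem \ref{minTriangle}, using only the Hamiltonian-conjugation naturality of the Lagrangian spectral invariants, and then to substitute the known global bound for $\RP^n \subset \CP^n$.

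First I would fix a transverse pair $L_1,L_2$ Hamiltonian isotopic to $L$ (transversality being part of the definition of $\cS$; if $L_1,L_2$ are not transverse one perturbs $L_2$ slightly by a Hamiltonian isotopy, which changes neither side of the claimed inequality). Since monotonicity is preserved under Hamiltonian isotopy, $L_1,L_2$ are monotone, so Theorem \ref{minTriangle} applies and yields $\cS(L_1,L_2)\le \tfrac12\,\nu(L_1,L_2;a)$, where the idempotent $a\in QH_*(L)$ is transported to an idempotent of $QH_*(L_1)$ via the continuation isomorphism $QH_*(L)\cong QH_*(L_1)$ induced by the isotopy; I keep the name $a$. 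It therefore suffices to bound $\nu(L_1,L_2;a)$ by $C(a,L)$.

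For this, choose a Hamiltonian diffeomorphism $\psi$ with $\psi(L)=L_1$. The spectral invariant $c(\cdot;a,\cdot)$, and hence the norm $\nu(\cdot;a,\cdot)$ of a single Lagrangian, is natural under conjugation by $\psi$ (one of the standard properties recalled in Section \ref{preliminariesSection}, see \cite{LeclercqZapolsky}); consequently the spectral norm of a \emph{pair} satisfies $\nu(L_1,L_2;a)=\nu\!\big(L,\psi^{-1}(L_2);a\big)$, with $\psi^{-1}(L_2)$ again Hamiltonian isotopic to $L$. By definition $\nu(L,\psi^{-1}(L_2);a)=\inf_H\nu(L;a,H)$ over Hamiltonians $H$ with $\phi_H^1(\psi^{-1}(L_2))=L$, and each $\nu(L;a,H)$ is at most $C(a,L)$ by hypothesis, so $\nu(L_1,L_2;a)\le C(a,L)$ and thus $\cS(L_1,L_2)\le\tfrac12 C(a,L)$.

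Finally, in $M=\CP^n$ with $L=\RP^n$ and $a=[L]$, the bound $\nu(L_1,L_2;a)\le\frac{n}{2n+2}$ recalled above following \cite{KislevShelukhin} provides the admissible global constant $C([\RP^n],\RP^n)=\frac{n}{2n+2}$; since $\RP^n$ is monotone in $\CP^n$, the general statement gives $\cS(L_1,L_2)\le\tfrac12\cdot\frac{n}{2n+2}=\frac{n}{4n+4}$. There is no genuine obstacle in this corollary; the only point requiring care is the bookkeeping of the identifications $QH_*(L)\cong QH_*(L_1)$ together with the naturality of $\nu$ under $\psi$, which is precisely where the hypothesis that $L_1$ and $L$ are Hamiltonian isotopic (rather than merely diffeomorphic) is used, everything else being a direct substitution into Theorem \ref{minTriangle}.
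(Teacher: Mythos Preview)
Your proof is correct and follows essentially the same route as the paper, which in fact states the corollary without any explicit proof, treating it as an immediate consequence of Theorem~\ref{minTriangle} together with the cited bound from \cite{KislevShelukhin}. The only extra content in your write-up is the careful tracking of the identification $QH_*(L)\cong QH_*(L_1)$ and the naturality of $\nu$ under Hamiltonian conjugation, which the paper leaves implicit.
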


Next we discuss two applications of Theorem \ref{minTriangle}.

\subsection{Connection with the Lagrangian Hofer distance}

Recall the Hofer norm of a Hamiltonian diffeomorphism
$$ \| \phi \|_{\text{Hofer}} = \inf \left\{ \int_0^1 \left( \max_{x \in M} H_t(x) - \min_{x \in M} H_t(x) \right) dt : \phi_H = \phi \right\} ,$$
where the infimum is taken over all Hamiltonian functions generating $\phi$. It is a deep fact that the Hofer norm is indeed non-degenerate. This was shown by Hofer \cite{hofer} for $M=\R^{2n}$, then generalized by Polterovich \cite{polt}, and finally proven in full generality by Lalonde and McDuff \cite{LalondeMcDuff}.
It is worth mentioning that the existence of such a metric for non-compact groups of transformations is highly unusual.
The Hofer distance was generalized by Chekanov \cite{chekanov} to a distance between two closed Hamiltonian isotopic Lagrangian submanifolds:
$$ d(L_1,L_2) = \inf_{\phi(L_1) = L_2, \; \phi \in Ham(M)} \| \phi \|_{\text{Hofer}}. $$

It was shown in \cite{LeclercqZapolsky} that when taking $a = [L]\in QH_*(L)$ one has
$ d(L_1,L_2) \geq \nu(L_1,L_2;[L_1]),$ which gives the following.
$$ d(L_1,L_2) \geq 2 \cS(L_1,L_2). $$    
We note that this observation introduces a new lower bound for the Hofer distance which is of a geometric nature (cf. the lower bound in \cite[Corollary 3.13]{BarraudCornea}).  

\subsection{The median quasi-state on $S^2$.}
Let $M$ be a closed monotone symplectic manifold with bounded spectral norm.
Entov and Polterovich \cite{EntovPolt} introduced the notion of symplectic quasi-states $$\zeta : C^\infty(M) \to \R,$$ showed their existence using Floer theoretic tools, and derived remarkable applications. 

In \cite{EntovPoltZap}, it is shown that there exists $C > 0$ with
\begin{equation}
\label{epzInequality}
| \zeta(F+G) - \zeta(F) - \zeta(G) |^2 \leq C \| \{F,G\} \| , 
\end{equation}
where $\{F,G\}$ is the Poisson bracket of $F$ and $G$.
It is also shown that $C \leq 2D$, where $D$ is the defect (defined {\it ibid.}) of the Calabi quasi-morphism \cite{EntovPoltQuasiMorphisms}.
As a consequence, for $M = S^2$ this gives the bound $C \leq 4$.
In \cite{Zapolsky}, this bound is improved to $C \leq 1$.
In fact, in \cite{Zapolsky} it is shown that $$| \zeta(F+G) - \zeta(F) - \zeta(G) |^2 \leq  \| \{F,G\} \|_{L^1} ,$$ and moreover, in \cite{AnatAmir} it is shown that this last inequality is sharp. 
In \cite{EntovPoltZap}, the authors pose the question of finding the sharp value of the constant $C$ in \eqref{epzInequality}. Below we prove that the sharp value is $\frac{1}{4}$ (see Theorem \ref{median_theorem}).

Let us recall the description of the symplectic quasi-state in $S^2$ as the median of the Reeb graph. Here we are using the fact \cite{EntovPolt} that in $S^2$ two symplectic quasi-states which vanish on functions with displaceable support coincide, and specifically the median quasi-state is equal to the Floer-homological quasi-state.
Let $F : S^2 \to \R$ be a smooth Morse function. Denote by $\Gamma_F$ the set of connected components of level sets of $F$, and denote by $\pi_F : S^2 \to \Gamma_F$ the natural projection. $\Gamma_F$ has a natural structure of a graph, called the Reeb graph, whose vertices correspond to level sets of critical points of $F$.  Normalize $\omega$ such that the area of $S^2$ is $1$, and then the measure $(\pi_F)_* \omega$ defines a Borel probability measure on $\Gamma_F$ whose restriction to any open edge is homeomorphic to the Lebesgue measure on an open interval. Hence, following \cite{EntovPoltQuasiMorphisms}, there exists a unique point $m_{\Gamma_F}$  in $\Gamma_F$ such that every connected component of $\Gamma_F \setminus m_{\Gamma_F}$ has area less than $\frac{1}{2}.$ Finally let $\zeta(F) := F(\pi_F^{-1}(m_{\Gamma_F}))$.
We note that $\pi_F^{-1}(m_{\Gamma_F})$ is generically either an equator, an embedded circle that divides $S^2$ into two parts of area $\frac{1}{2}$, or a figure eight, a closed curve with a single self-intersection that divides $S^2$ into three parts where each part has area less than $\frac{1}{2}$.
We refer to \cite{PoltRosenBook} for a thorough overview on symplectic quasi-states and function theory on symplectic manifolds in general.

\begin{theorem}
\label{median_theorem}
For $F,G \in C^\infty(S^2)$ one has
$$
\sup  \frac{| \zeta(F+G) - \zeta(F) - \zeta(G) |^2}{\| \{F,G\}\|}  = \frac{1}{4} .
$$
\end{theorem}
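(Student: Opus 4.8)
The plan is to prove the equality in two parts: the upper bound $\sup \le \tfrac14$ and the matching lower bound $\sup \ge \tfrac14$.

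\textbf{Upper bound.} First I would reduce the median quasi-state inequality to a statement about holomorphic triangles, exploiting the identification of $\zeta$ on $S^2$ with the Floer-homological (Lagrangian) spectral quasi-state. The key geometric object is the following: given $F, G \in C^\infty(S^2)$, consider the three circles (generically equators or figure-eights) $L_1 = \pi_{F}^{-1}(m_{\Gamma_F})$, $L_2 = \pi_G^{-1}(m_{\Gamma_G})$, and $L_3 = \pi_{F+G}^{-1}(m_{\Gamma_{F+G}})$, i.e. the median level sets of $F$, $G$ and $F+G$. After a small perturbation one may assume these are monotone Lagrangian circles (the equator in $S^2 = \CP^1$, with $a = [L]$) that pairwise intersect transversely and have no triple points. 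The numbers $\zeta(F+G) - \zeta(F) - \zeta(G)$ should be expressible, up to the relevant sign conventions, as a sum/difference of the values of $F, G, F+G$ read off at a vertex of a holomorphic triangle $\Delta \in \cD(L_1, L_2, L_3; J)$, and the symplectic area of that triangle controls how far this combination can be from zero. Concretely, I expect an inequality of the shape
\[
| \zeta(F+G) - \zeta(F) - \zeta(G) | \le \text{(something like)} \ \sqrt{ 2\,\area(\Delta) \cdot \| \{F, G\} \| },
\]
coming from estimating the ``holonomy'' defect of the combined function along the triangle's edges against the Poisson bracket, as in the Entov--Polterovich--Zapolsky circle of ideas. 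Then, choosing the triangle realizing the infimum in the definition of $\cS(L_1, L_2)$ and invoking Theorem \ref{minTriangle} together with the global bound $\nu(L_1, L_2; [L_1]) \le \tfrac{n}{2n+2} = \tfrac14$ for $\CP^1$, one gets $\area(\Delta) \le \cS(L_1, L_2) \le \tfrac12 \cdot \tfrac14 = \tfrac18$, and substituting this in the displayed estimate yields $| \zeta(F+G) - \zeta(F) - \zeta(G) |^2 \le 2 \cdot \tfrac18 \cdot \| \{F, G\} \| = \tfrac14 \| \{F,G\} \|$, as desired. The main obstacle here is making the passage from the quasi-state defect to the triangle area fully rigorous: one must handle the degenerate configurations (figure-eights, near-critical median sets, the Lagrangians failing transversality), justify the perturbation to the monotone setting without losing control of either side of the inequality, and correctly track the constant $2$ in the area-versus-bracket estimate so that it combines with $\tfrac18$ to give exactly $\tfrac14$.

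\textbf{Lower bound.} For sharpness I would construct an explicit family of pairs $(F, G)$ on $S^2$ for which the ratio approaches $\tfrac14$. The natural candidates are functions supported near, and adapted to, a configuration of three median circles meeting in a ``thin triangle'' of area close to $\cS(S^2$-equator$) = \tfrac18$: one wants $F$ and $G$ whose median level sets are two sides of such a triangle and whose bracket is concentrated and small. A clean model is to take $F$ and $G$ to be (smoothings of) suitable functions built from the two coordinate functions on a small chart identified with a disk in $\R^2$, scaled and tilted so that moving along the median of $F$ changes $G$ by an amount comparable to the side length of the extremal triangle while $\{F, G\}$ stays bounded; one then computes $\zeta(F)$, $\zeta(G)$, $\zeta(F+G)$ directly from the Reeb-graph median description. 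This is essentially the reverse of the upper bound argument, and I would model it on the sharpness construction of Amir--Buhovsky(?)/``AnatAmir'' cited in the excerpt for the $L^1$ version, adjusting it so that it is the $L^\infty$ norm $\| \{F,G\} \|$ that appears and the limiting ratio is $\tfrac14$ rather than $1$. The main difficulty on this side is verifying that the quasi-state values of the constructed functions are exactly what the naive picture predicts — in particular that the median point of the Reeb graph of $F+G$ sits where expected — and ensuring the limit of the ratio is genuinely $\tfrac14$ and not merely bounded below by a smaller constant.

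\textbf{Consequence for the Calabi defect.} Finally, since \cite{EntovPoltZap} gives $C \le 2D$ and we have just shown $C = \tfrac14$, it follows that the defect $D$ of the Calabi quasi-morphism on $S^2$ satisfies $D \ge \tfrac18$; I would record this as a corollary, noting that the sharpness part of Theorem \ref{median_theorem} is exactly what powers this lower bound.
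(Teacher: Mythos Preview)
Your overall architecture --- reduce the quasi-state defect to the area of a triangle between the three median level sets, then bound that area by $\tfrac18$ --- is exactly what the paper does. The bracket estimate you sketch is also correct and is made precise in the paper via the map $\Phi=(F,G):S^2\to\R^2$: the image under $\Phi$ of any triangle $\Delta$ with sides on the three medians covers the Euclidean triangle of area $\tfrac12|\zeta(F+G)-\zeta(F)-\zeta(G)|^2$, and comparing $\Phi^*(dx\wedge dy)=\{F,G\}\,\omega$ with $\|\{F,G\}\|\cdot\omega$ gives your inequality.

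The genuine gap is in how you obtain $\area(\Delta)\le\tfrac18$. You propose to invoke Theorem~\ref{minTriangle} together with the $\CP^1$ spectral-norm bound, perturbing the medians into monotone Lagrangian equators. This does not work: the median of a Morse function $F$ is a figure eight precisely when the median point of the Reeb graph is a trivalent vertex, i.e.\ a saddle whose three complementary regions each have area $<\tfrac12$. That is an \emph{open} condition on $F$, so no small perturbation of $F$ will turn its median into an embedded circle, and a figure eight is not Hamiltonian isotopic to an equator (it is not even an embedded Lagrangian), so Theorem~\ref{minTriangle} simply does not apply. The paper confronts this head-on: it proves a separate, purely topological result (Proposition~\ref{s2triangleProp}) that for any three median curves in $S^2$ --- equators or figure eights --- there is an immersed triangle of area $\le\tfrac18$. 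The argument is a multi-step case analysis (crossings, lifts to a universal cover, essential chords, ``ears''), and it is this proposition, not Theorem~\ref{minTriangle}, that powers the upper bound in Theorem~\ref{median_theorem}.

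For sharpness, your description is in the right spirit but too vague to count as a plan. The paper's example is quite specific: take a region $\Sigma\subset\R^2$ made of four triangles each of area $\tfrac18$ arranged so that the coordinate projections have the right median behaviour, pull back the coordinate functions by an area-preserving map $D^2\to\Sigma$ (so $\{f,g\}\equiv 1$), double to $S^2$, and smooth near the equator while controlling the bracket. The point is that $\zeta(f),\zeta(g)\approx 0$ while $\zeta(f+g)\approx\tfrac12$, and $\|\{f,g\}\|\approx 1$.
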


\begin{remark}
In the proof one uses a bound on the area of a triangle between $\pi_F^{-1}(m_{\Gamma_F}), \pi_G^{-1}(m_{\Gamma_G})$ and $\pi_{F+G}^{-1}(m_{\Gamma_{F+G}})$ (cf. Section \ref{medianSection} below). If all three of them are equators, Theorem \ref{minTriangle} gives the required bound. However, if some of them are figure eights, one needs to generalize the bound in Theorem \ref{minTriangle} to this case as well which is done in Section \ref{trianglesS2Section} below. It would be interesting to find Floer theoretical methods that would enable bounding the areas of triangles between figure eights.
\end{remark}

\begin{remark}
 Theorem \ref{median_theorem} implies the following lower bound on the defect of the Calabi quasi-morphism $D \geq \frac{1}{8}$.
In $\cite{KislevShelukhin}$ the defect is shown to be bounded above by $\frac{1}{2}$, so overall $\frac{1}{8} \leq D \leq \frac{1}{2}$.
It would be interesting to find the exact value of $D$.
\end{remark}

\subsection{Organization of the paper}
In Section \ref{preliminariesSection} we recall two definitions of Lagrangian Floer homology and show their equivalence, and then we discuss briefly Lagrangian spectral invariants. In Section \ref{holomorphicTrianglesSection} we prove Theorem \ref{minTriangle}. In Section \ref{trianglesS2Section} we bound the area of triangles in $S^2$ between three medians (which might include e.g. figure eights). Finally in section \ref{medianSection} we prove Theorem \ref{median_theorem}.

\subsection{Acknowledgements}
We wish to thank Leonid Polterovich, Yaron Ostrover, and Egor Shelukhin for helpful conversations and suggestions. P. H-K. is partially supported
by the ISF grant No. 938/22. M.K. was partially supported by the Azrieli Fellowship during some of the work on this project.

\section{Preliminaries}
\label{preliminariesSection}

\subsection{Lagrangian Floer homology}

Let $L \subset M$ be a monotone Lagrangian with $N_L \geq 2$. In this text we use two equivalent ways to define the Floer complex of $L$. In each one of them a different set of data is needed for the definition. In the first approach the generators are intersection points with a different Lagrangian submanifold $L'$ Hamiltonian isotopic to $L$. In the second approach the generators are Hamiltonian chords from $L$ to itself, of a time-dependent Hamiltonian $H$. (Actually, there is a unified approach where one considers a pair of Lagrangian submanifolds and a Hamilfonian function, but we will not describe this approach here.) The first approach is needed for us because the $J$-holomorphic triangles that appear in $\cS$, appear naturally in the product structure, and the area of such triangles can be bounded by action difference of intersection points. The second approach is needed for us because the spectral norm is defined more naturally in this language. In this subsection we give the definitions of both approaches and explain the equivalence between them. We refer to \cite{oh1, oh2, LeclercqZapolsky, KislevShelukhin, usher} and references therein for more details.

\subsubsection{Lagrangian intersections}

Let us define the Lagrangian Floer complex in the first approach for a pair of Lagrangian submanifolds $L_1,L_2$. Fix a path $\gamma_0 : [0,1] \to M$ from $L_1$ to $L_2$ (i.e. $\gamma_0(0)\in L_1$ and $\gamma_0(1) \in L_2$).
Let $\Gamma(L_1,L_2;\gamma_0)$ be the space of paths from $L_1$ to $L_2$ homotopic with endpoints on $L_1$ and $L_2$ respectively to $\gamma_0$. We will sometimes omit $\gamma_0$ from the notations and write simply $\Gamma(L_1,L_2)$.
Let $\gamma \in \Gamma(L_1,L_2)$. Let $\tilde{\gamma}$ be a path from $\gamma_0$ to $\gamma$ in $\Gamma(L_1,L_2)$. i.e.
\[ \tilde{\gamma}(0,\cdot) = \gamma_0(\cdot) ,\]
\[ \tilde{\gamma}(1,\cdot) = \gamma(\cdot) ,\]
\[ \tilde{\gamma}(\cdot,0) \in L_1 ,\]
\[ \tilde{\gamma}(\cdot,1) \in L_2 .\]
We call $\tilde{\gamma}$ a capping of $\gamma$. Given another capping $\tilde{\gamma}'$, one can consider the concatenation $\bar{\tilde{\gamma}} \# \tilde{\gamma}'$ as a map from the cylinder to $M$ with the boundary on $L_1$ and $L_2$. To this cylinder one can attach its symplectic area and its Maslov index. We say that $\tilde{\gamma}$ and $\tilde{\gamma}'$ are equivalent if both the symplectic area and the Maslov index of the cylinder vanish.
Let $\tilde{\Gamma}(L_1,L_2)$ be a covering of $\Gamma(L_1,L_2)$ composed of pairs $(\gamma,[\tilde{\gamma}])$ where $[\tilde{\gamma}]$ is an equivalence class.
The action $\cA(\gamma,[\tilde{\gamma}])$ equals minus the symplectic area of $\tilde{\gamma}$. 
One can check that the critical points of the action are constant paths in $L_1 \cap L_2$ (with different cappings). The Floer chain complex $CF_*(L_1,L_2)$ is generated (over $\Z_2$) by critical points of the action, i.e. intersection points with different cappings. 

%To define a grading on the complex, we need to add additional choices to the definition. This time in the form of a path $\lambda$ of lagrangian subspaces of $T_x M$ along $\gamma_0$ so that $\lambda(0) = T_{\gamma_0(0)} L_0$ and $\lambda(1) = T_{\gamma_0(1)} L_1$. This choice enables us to consistently define trivializations of $\tilde{\gamma}^*TM$ to get a unique Maslov index (See \cite{Oh}). 

The boundary operator of this complex counts finite energy $J$-holomorphic curves $u: \R \times [0,1] \to M$ that satisfy
\[ u(\cdot,0) \in L_0,\]
\[ u(\cdot,1) \in L_1,\]
\[ \lim_{t \to \pm \infty}u(t,\cdot) = x_\pm ,\]
\[ [\tilde{x}_- \# u]  =  [\tilde{x}_+].\]
We call such $u$ Floer strips connecting $(x_\pm, \tilde{x}_\pm)$.

Finally, let us describe the product structure. Let $\tilde{x}_{12} \in CF_*(L_1,L_2)$ and $\tilde{x}_{23} \in CF_*(L_2,L_3)$ be intersection points with cappings. We want to define the product $\tilde{x}_{12} \ast \tilde{x}_{23} \in CF_*(L_1,L_3)$ and extend this definition by linearity. In order to define this product, one needs to be careful when choosing $\gamma_0$ 
%and $\lambda$
for each pair of Lagrangian submanifolds. One way to do this is to pick a point $y \in M$ 
%and a Lagrangian subspace $R_y \subset T_y M$
, and then consider pairs $(L, \gamma_L)$, where $\gamma_L$ connects $y$ to some point $x_L \in L$. 
%and $\lambda_L$ is a path of Lagrangian subspaces along $\gamma_L$ that starts at $R_y$ and ends at $T_{x_L} L$. 
One can concatenate these paths to get base paths $\gamma_{ij} := \gamma_{L_i} \# \overline{\gamma}_{L_j}$.% and $\lambda_{ij} := \lambda_{L_i} \# \overline{\lambda}_{L_j}$.

% We wish to consider classes $B$ of maps from the $2$-disc to $M$ up to homotopy with fixed boundary that can be represented by a gluing of three bounding bands $w_{i(i+1)} : [0,1]\times [0,1] \to M$ satisfying
% \[ w_{i(i+1)}(0,t) = \gamma_{i(i+1)}(t) ,\]
% \[ w_{i(i+1)}(s,0) \in L_i ,\]
% \[ w_{i(i+1)}(s,1) \in L_{i+1} ,\]
% \[ w_{i(i+1)}(1,t) = x_{i(i+1)}.\]
% (When we write $i+1$ as an index, in the case $i=3$ we replace $i+1$ with $1$.)

% Let $\mathcal{M}(B;x_{12},x_{23};x_{13})$ be the moduli space of $J$-holomorphic triangles with homotopy class $B$, i.e. $J$-holomorphic maps
% \[ u : D^2 \setminus \{z_1,z_2,z_3\} \to M ,\]
% where $z_1,z_2,z_3 \in \partial D^2$, the three parts of the boundary lie on $L_1,L_2,L_3$ respectively, $u$ extends to a continuous map from $D^2$ and $u(z_1) = x_{12}, u(z_2) = x_{23}, u(z_3) = x_{13}$.
% The product is then defined by
% \[\tilde{x}_{12} \ast \tilde{x}_{23} := \sum_{x_{13}} \sum_B \#_2 \mathcal{M}(B;x_{12},x_{23};x_{13}) \cdot (x_{13},[\tilde{x}_{12}]\#[\tilde{x}_{23}]\#B ).\]
% The sum is over all intersection points $x_{13}$ and classes $B$ that makes $\mathcal{M}(B;x_{12},x_{23};x_{13})$ a zeroth dimensional manifold.
% The symplectic area of the triangle is
% \[ \area(B) = \cA(x_{12},[\tilde{x}_{12}]) + \cA(x_{23},[\tilde{x}_{23}]) - \cA(x_{13},[\tilde{x}_{13}]) ,\]
% where $[\tilde{x}_{13}] = [\tilde{x}_{12}]\#[\tilde{x}_{23}]\#B.$

Fix now $\tilde{x}_{13} \in CF_*(L_1, L_3)$. The coefficient $\langle \tilde{x}_{12} \ast  \tilde{x}_{23},  \tilde{x}_{13} \rangle$ is defined by counting $J$-holomorphic triangle with boundaries on $L_1 \cup L_2 \cup L_3$ between $x_{12}$, $x_{23}$ and $x_{13}$ which are compatible with the given cappings. Slightly more accurately, one counts $J$-holomorphic maps 
\[ u : D^2 \setminus \{z_1,z_2,z_3\} \to M ,\]
where $z_1,z_2,z_3 \in \partial D^2$, such that the three parts of the boundary lie on $L_1,L_2,L_3$ respectively, $u$ extends continuously to $D^2$ by $u(z_1) = x_{12}, u(z_2) = x_{23}, u(z_3) = x_{13}$, and $[\tilde{x}_{12} \#\tilde{x}_{23} \# u] = [\tilde{x}_{13}]$. Note that 
$$\int_{D^2} u^* \omega = \mathcal{A}(\tilde{x}_{12}) + \mathcal{A}(\tilde{x}_{23}) - \mathcal{A} (\tilde{x}_{13}).$$

\subsubsection{Hamiltonian chords}

Let us describe the second approach to Lagrangian Floer theory. Fix a closed monotone Lagrangian $L \subset M$. Let $H : M \times [0,1] \to \R$ be a Hamiltonian function which satisfies $H(0)=H(1)=0$, and let $J_t$ be a time-dependent almost complex structure. Denote $\Omega(L) = \{ \alpha : [0,1] \to M : \alpha(0), \alpha(1) \in L, [\alpha] = 0 \in \pi_1(M,L) \}$.
Fix a point $x_0 \in L$. A capping of a curve $\alpha$ is a homotopy in $\Omega$ from $\alpha$ to the constant path $x_0$. i.e. $\tilde{\alpha} : [0,1]^2 \to M$ with
\[ \tilde{\alpha}(0,t) = x_0,\]
\[  \tilde{\alpha}(1,t) = \alpha(t) ,\]
\[  \tilde{\alpha}(s,0), \tilde{\alpha}(s,1) \in L.\]
Two such homotopies are equivalent if the area and Maslov index of their concatenation is zero.
Let $\widetilde{\Omega}(L)$ be the covering space of $\Omega(L)$ composed of pairs $(\alpha, [\tilde{\alpha}])$, where $[ \tilde{\alpha}]$ is an equivalence class.
The action of a pair $(\alpha, [ \tilde{\alpha}])$ is
\[ \cA(\alpha,[ \tilde{\alpha}]) = \int_0^1 H_t(\alpha(t)) dt - \area( \tilde{\alpha}) .\]
The critical points of $\cA$ on $\widetilde{\Omega}(L)$ are Hamiltonian chords of $H$ with different cappings.
The Floer chain complex is generated (over $\Z_2$) by the critical points, and we extend the definition of the action by $\cA( \sum_{i=1}^N (\alpha_i,[\tilde{\alpha}_i])) = \max_{i=1}^N \cA(\alpha_i,[\tilde{\alpha}_i]).$
The grading of the complex is given by the Conley--Zehnder index.
The boundary operator counts isolated finite energy solutions $v : \R \times [0,1] \to M$ to the Floer equation
\[ \partial_s v + J_t (\partial_t v - X_{H_t}(v)) = 0 , \]
with boundary conditions 
\[v(\cdot,0),v(\cdot,1) \in L,\]
\[v(\pm \infty,\cdot) = \alpha_\pm(\cdot),\]
\[ [\tilde{\alpha}_- \# v] = [\tilde{\alpha}_+].\]

Let us describe the product structure.
Let $\tilde{y}_1 \in CF(L,H_1)$,$\tilde{y}_2 \in CF(L,H_2)$, where we denote $\tilde{y}_i = (\alpha_i,[\tilde{\alpha}_i])$. Choose $H_1,H_2$ that satisfy $(H_1)_t=(H_2)_t=0$ near $t=0$ and near $t=1$.
The product $\tilde{y}_1 \ast \tilde{y}_2 \in CF(L, H_1 \# H_2)$ is defined by counting isolated maps $v \in D^2 \setminus\{z_1,z_2,z_3\}$ satisfying the following conditions.
For $z \in D^2 \setminus\{z_1,z_2,z_3\}$ let $J_z$ be an almost complex structure on $M$ compatible with $\omega$ depending on $z$, and let $H_\xi : M \to \R$ be a Hamiltonian one form depending on a vector $\xi \in T (D^2 \setminus\{z_1,z_2,z_3\})$. We require $J_z$ and $H_\xi$ to be compatible with the Floer data near the punctures. Specifically we choose cylindrical strips connecting $z_1$ and $z_3$ and $z_2$ and $z_3$ respectively, and we take $H_\xi$ to be  $(H_1)_t dt$ and $(H_2)_t dt$ on both cylindrical strips respectively and extend by zero (see \cite[Section 2.2]{LeclercqZapolsky} and \cite[Section 2.5]{KislevShelukhin}).
Then $v$ is required to solve the following equation
\begin{equation} \label{perturbed-j-hol-eq} (dv(\xi) - X_{H_{\xi}}(v))^{(0,1)} = 0, \end{equation}
i.e. the complex anti-linear part of the  the 1-form $dv(\xi) - X_{H_{\xi}}$ on $D^2 \setminus\{z_1,z_2,z_3\}$ with values in the complex vector bundle $(v^* TM, J_z)$ vanishes.
In addition $v |_{\partial D^2} \subset L$ and $v$ is required to be asymptotic to $\alpha_1,\alpha_2,\alpha_3$ along the punctures, and satisfy $[\tilde{\alpha}_1 \# \tilde{\alpha}_2 \# v] = [\tilde{\alpha}_3]$.

\subsubsection{The equivalence of both approaches}

Finally, let us describe the equivalence between the two approaches.
Let $L_1,L_2$ be closed monotone Hamiltonian isotopic intersecting transversely Lagrangian submanifolds with $N_{L_1} \geq 2$.
Let $H$ be a Hamiltonian function with $\phi_H^1(L_2) = L_1$, so that there exists $y \in L_1$ with 
\begin{equation}\label{eq-normalization-zero-int}
\int_0^1 H_t(\phi_H^t(y)) dt = 0.    
\end{equation}
In order to define $CF(L_1,L_2)$ we choose the fixed path $\gamma_0$ to be $(\phi_{H}^t)^{-1}(y)$.
In order to define $CF(L_1,{H})$ we choose the basepoint $y  \in L_1$.

Consider the map $\Phi_H : \Gamma(L_1, L_2) \to \Omega (L_1, H)$ given by $\gamma \mapsto \phi_H^t\gamma (t)$. It lifts in an obvious way to a map we still denote by $\Phi_H : \widetilde{\Gamma}(L_1, L_2) \to \widetilde{\Omega} (L_1, H)$, and one easily checks that in view of condition \ref{eq-normalization-zero-int} it preserves action. In particular, $\Phi_H$ induces a bijection between generators of the Floer complexes $CF(L_1, L_2)$ and $CF(L_1, H)$.

Next, let us consider the differentials on both complexes. Let $(x_\pm, \tilde{x}_\pm)$ be two intersection points with cappings. Given a $J$-holomorphic strip $u$ between $(x_\pm, \tilde{x}_\pm)$, consider the map $v(s,t) = \phi_H^t u(s,t)$. It is easy to versify that $v$ is a connecting Floer strip (with the conjugated almost complex structure $(\phi_H^t)_* J (\phi_H^t)_*^{-1}$) between the capped Hamiltonian chords $\Phi_H(x_\pm, \tilde{x}_\pm)$. We conclude that $\Phi_H$ intertwines the differentials on the two Floer complexes.

Finally, in terms of the product structure, consider the product $\tilde{x}_{12} * \tilde{x}_{23}$ like before. We will show equivalence to the product of some $\tilde{y}_1 \in CF(L_1,H_{12})$ and $\tilde{y}_2 \in CF(L_1,H_{23} \circ \phi_{12}^{-1})$, where $\phi^t_{12}, \phi^t_{23}$ are the Hamiltonian flows of $H_{12}$ and $H_{23}$ respectively, and $H_{12}, H_{23}$ are such that $\phi_{12}(L_2) = L_1$ and $\phi_{23}(L_3) = L_2$. In addition, choose $H_{12}$ and $H_{23}$ such that they vanish near $t=0$ and near $t=1$. 
Note that in general there is an obvious correspondence between $CF(L,H)$ and $CF(\phi(L), \phi^* H = H \circ \phi^{-1})$ and hence we remark that the choice for the second Hamiltonian $H_{23} \circ \phi_{12}^{-1}$ is consistent with the correspondence that we have showed previously and indeed after applying $\phi_{12}$ it corresponds to $CF(L_2,L_3)$. 
For a $J$-holomorphic triangle $u : D^2 \setminus \{z_1,z_2,z_3\}$ one can define $v: D^2 \setminus \{z_1,z_2,z_3\}$ by $v(z) = \phi_{z} u(z)$, where $\phi_z$ is defined as follows. 
Denote the cylindrical ends between $z_1$ and $z_3$ and between $z_2$ and $z_3$ by $\epsilon_{13} : \R \times [0,1] \to D^2$ and $\epsilon_{23} : \R \times [0,1] \to D^2$ respectively. Note that $Im(\epsilon_{13})$ and $Im(\epsilon_{23})$ divide $D^2 \setminus(Im(\epsilon_{13}) \cup Im(\epsilon_{23}))$ into three parts denoted by $\Sigma_1, \Sigma_2, \Sigma_3$. Define $\phi_z = \Id$ for $z \in \Sigma_1$, $\phi_z = \phi_{12}$ for $z \in \Sigma_2$ and $\phi_z = \phi_{12} \circ \phi_{23}$ for $z \in \Sigma_3$. For $z= \epsilon_{13}(s,t)$ set $\phi_z = \phi_{12}^t$, and for $z = \epsilon_{23}(s,t)$ we set $\phi_z = \phi_{12}^1 \circ \phi_{23}^t$.
It is not hard to check that now $v$ satisfies \eqref{perturbed-j-hol-eq} for $J_{z} := (\phi_{z})_* J (\phi_{z})_*^{-1}$. 
In terms of the base point and base paths, choose a base point $y \in L_1$ for $CF(L_1,H_{ij})$, and choose $\gamma_{L_1} \equiv y, \gamma_{L_2} = \{\phi_{12}^{-t}(y)\},$ and $\gamma_{L_3} = \gamma_{L_2} \# \{ \phi_{23}^{-t} \circ \phi_{12}^{-1}(y)\}$.

This again demonstrates the one-to-one correspondence between the two counting problems in the two approaches for Floer homology.

\subsection{Spectral invariants}
Spectral invariants were introduced into symplectic topology by Viterbo \cite{Viterbo-specGF} in the context of generating functions, by Schwarz \cite{Schwarz:action-spectrum} and Oh \cite{oh05} in Hamiltonian Floer theory (see also \cite{Usher-spec}), and by Leclercq--Zapolsky \cite{LeclercqZapolsky} in Lagrangian Floer theory, where one can find a comprehensive review of the literature.

Let $HF(L,H)^t$ be the action filtered Floer homology generated by elements in $CF(L,H)$ with actions smaller than $t$.
The inclusion $CF(L,H)^t \subset CF(L,H)$ gives rise to the maps $i_t : HF(L,H)^{t} \to HF(L,H)$. 
Let $\Phi_{PSS} : QH(L) \to HF(L,H)$ be the Piunikhin-Salamon-Schwarz isomorphism (see \cite{albers, biranCornea1,biranCornea2,biranCornea3,katicMilinkovic,ohZhu,PSS}), and define the spectral invariant $c(-,H): QH(L) \setminus\{0\} \to \R$ to be
$$ c(a,H) = \inf \{t : \Phi_{PSS} (a) \in \text{Im}(i_t) \}.$$

The following proposition summarizing the properties of spectral invariants in Lagrangian Floer homology is proved in \cite{LeclercqZapolsky}.

\medskip

\begin{proposition}\label{prop:main_properties_Lagr_sp_invts}
	Let $L$ be a closed monotone Lagrangian of $(M,\omega)$ with minimal Maslov number $N_L \geq 2$. The spectral invariant
	$$c : QH_*(L) \setminus \{0\} \times C^0 \big(M\times [0,1] \big) \rightarrow \R$$
	satisfies the following properties. 
	\begin{Properties} 	
		\item[Spectrality] For $H \in C^\infty \big(M \times [0,1]\big)$, $c(a;H) \in  \Spec(H,L)$.
		\item[Ring action] For $r \in R$, $c(r \cdot a;H) \leq c(a;H)$. In particular, if $r$ is invertible, then $c(r \cdot a;H) = c(a;H)$. If $R = \bK$ is a field, $c(\lambda \cdot a; H) = c(a;H) - \nu(\lambda),$ for all $\lambda \in \Lambda.$
		\item[Symplectic invariance] Let $\psi \in \text{Symp}(M,\omega)$ and $L'=\psi(L)$. Let
		$$c' : QH_*(L') \times C^0\big(M\times [0,1] \big) \to \R$$
		be the corresponding spectral invariant. Then $c(a;H) = c'(\psi_*(a);H \circ \psi^{-1})$. 
		\item[Normalization] If $\alpha$ is a function of time then $$c(a;H+\alpha)=c(a;H) + \int_0^1 \alpha(t) \,dt\,.$$ We have $c(a;0) = \cA(a)$ and $c_+(0)=0$.
		\item[Continuity] For any $H$ and $K$, and $a \neq 0$:
		$$\int_0^1 \min_M (K_t - H_t) \,dt \leq c(a;K) - c(a;H) \leq \int_0^1 \max_M (K_t - H_t) \,dt \,.$$ 
		\item[Monotonicity] If $H \leq K$, then $c(a;H) \leq c(a;K)$.
		\item[Triangle inequality] For all $a$ and $b$, $c(a \ast b; H \# K) \leq c(b;H) + c(a;K)$.
		
		\item[Lagrangian control] If for all $t$, $H_t|_L = c(t) \in \mathbb{R}$ (respectively $\leq$, $\geq$), then
		$$c([L];H)=\int_0^1 c(t) \,dt\quad\text{(respectively } \leq, \geq)\,.$$
		Thus for all $H \in \cH$:
		$$\int_0^1 \min_L H_t \,dt \leq c([L];H) \leq \int_0^1 \max_L H_t \,dt \,.$$
		
		\item[Duality] Let $ a^\vee \in QH^{n-k}(L)$ be the element corresponding to $a \in QH_k(L)$  under the natural duality isomorphism. Then we have
		$$c(a;\overline{H}) = - \inf \big\{c(b;H)\,|\, b \in QH_{n-k}(L) \text{ with } \langle a^\vee, b\rangle \neq 0 \big\}\,.$$
		
		\item[Non-negativity] $c(a;H) + c(a;\overline{H}) \geq 0$ for an idempotent $a \in QH_*(L)$. 
		\item[Maximum] $c(a;H) \leq c([L];H)+\cA(a)$. 
	\end{Properties}
\end{proposition}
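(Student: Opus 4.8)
The plan is to read off every item directly from the definition $c(a,H)=\inf\{t:\Phi_{PSS}(a)\in\text{Im}(i_t)\}$, using four ingredients, exactly along the lines of \cite{LeclercqZapolsky}: (i) the structure of the action spectrum $\Spec(H,L)$ for a monotone Lagrangian with $N_L\ge 2$; (ii) continuation maps between $HF(L,H)$ and $HF(L,K)$ together with their action estimates; (iii) the triangle product $CF(L,H)\otimes CF(L,K)\to CF(L,H\# K)$ together with the area identity $\int_{D^2}u^*\omega=\cA(\tilde x_{12})+\cA(\tilde x_{23})-\cA(\tilde x_{13})$ recorded above; and (iv) a Poincar\'e-type duality on Lagrangian Floer homology relating $HF(L,H)$ and $HF(L,\overline H)$.

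\emph{The formal properties.} Since $L$ is monotone with $N_L\ge 2$, the period group $\omega(\pi_2(M,L))=\tau\,\mu(\pi_2(M,L))\subset\tau N_L\Z$ is discrete, so $\Spec(H,L)$ is closed and nowhere dense and each bounded action window contains only finitely many generators of $CF(L,H)$; this makes the infimum in the definition attained and shows that if $c(a,H)\notin\Spec(H,L)$ then $i_t$ is locally constant in $t$ near $c(a,H)$, contradicting minimality, which gives Spectrality. Normalization is a chain-level statement: replacing $H$ by $H+\alpha(t)$ leaves $X_{H_t}$ and hence all Floer data and moduli spaces untouched while shifting the action of every capped chord by $\int_0^1\alpha$, and $c(a;0)=\cA(a)$ records that for $H=0$ the map $\Phi_{PSS}$ is the canonical identification $HF(L,0)\cong QH(L)$ matched with minimal cappings. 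Symplectic invariance holds because $\psi$ identifies the Floer data of $(L,H)$ with that of $(\psi(L),H\circ\psi^{-1})$, preserving action (both $\omega$-areas and $\int H_t(\alpha(t))\,dt$ are $\psi$-invariant) and intertwining the PSS maps. Continuity is the basic energy estimate: a monotone homotopy from $H$ to $K$ induces a continuation map $HF(L,H)\to HF(L,K)$ commuting with $\Phi_{PSS}$ and raising the action filtration by at most $\int_0^1\max_M(K_t-H_t)\,dt$; the lower bound follows from the same estimate with $H$ and $K$ exchanged, and Monotonicity is the case $K\ge H$. The Ring action properties hold because $\Phi_{PSS}$ is a map of $R$-modules and multiplication by $\lambda\in\Lambda$ shifts the action filtration by $-\nu(\lambda)$; for invertible $r$ one applies the inequality to both $r$ and $r^{-1}$.

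\emph{Consequences of the product and Lagrangian control.} By the area identity a chain of action $\le s$ times one of action $\le s'$ has action $\le s+s'$, and on homology this product is carried by $\Phi_{PSS}$ to the quantum product of $QH(L)$; hence $c(a\ast b;H\# K)\le c(b;H)+c(a;K)$ (using that concatenation with a reparametrizing Hamiltonian leaves $c$ unchanged), which is the Triangle inequality. The Maximum property is the special case $a=[L]\ast a$ together with $c(a;0)=\cA(a)$: $c(a;H)=c([L]\ast a;H\# 0)\le c(a;0)+c([L];H)=\cA(a)+c([L];H)$. Non-negativity for an idempotent $a$ is then deduced as in the proof of \cite[Theorem~35]{LeclercqZapolsky}, by combining the Duality formula with the Maximum property and the ring structure, idempotency being what ensures that the class realizing the infimum in the Duality formula can be compared with $a$. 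For Lagrangian control one squeezes: given $H$ with $H_t|_L=c(t)$, choose Hamiltonians $H^-\le H\le H^+$ agreeing with the time-function $c(t)$ on a fixed neighborhood of $L$; for these the PSS image of $[L]$ is represented by a cycle supported near $L$ and is detected precisely at action $\int_0^1 c(t)\,dt$, so $c([L];H^\pm)=\int_0^1 c(t)\,dt$, and Monotonicity together with a $C^0$-limiting argument pins $c([L];H)$ between the two; the one-sided versions and the $\min_L/\max_L$ bounds come from the same construction with inequalities.

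\emph{Duality, which is the main obstacle.} The one genuinely technical input will be Duality. I would construct a nondegenerate pairing $HF(L,H)^{t}\otimes HF(L,\overline H)^{s}\to\Z_2$ for $s+t<0$ by reversing the $\R$-coordinate of Floer strips — which interchanges the two boundary labels and negates the action — and then verify that the resulting isomorphism $HF(L,H)\cong HF(L,\overline H)^\vee$ carries the action filtration to the reversed one and intertwines $\Phi_{PSS}^H$ with the transpose of $\Phi_{PSS}^{\overline H}$ under the duality isomorphism $QH_k(L)\cong QH^{n-k}(L)$; unwinding the definition of $c$ through this isomorphism yields the stated formula. The hard part is exactly this: checking simultaneously that strip-reversal is an isomorphism on homology, that it reverses the action filtration on the nose, and that it is compatible with PSS and with the duality and module structures on $QH(L)$. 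Everything else reduces quickly to the definition, the continuation estimate, and the area identity, and for the present paper this whole package can be imported verbatim from \cite{LeclercqZapolsky}.
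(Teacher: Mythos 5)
Your proposal is consistent with the paper, which offers no independent argument for this proposition and simply cites \cite{LeclercqZapolsky}; your sketch is essentially a summary of the proofs in that reference (continuation estimates, the triangle-product action identity, strip-reversal duality, and the deduction of Non-negativity as in Theorem 35 there), and you correctly conclude that the whole package is imported verbatim. So you take the same route as the paper, just with more detail spelled out.
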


\section{Pseudo-holomorphic triangles}
\label{holomorphicTrianglesSection}
% \begin{lemma}
% \label{lemmaActionLessThanSpectralInvariants}
%  Let $L \subset M$ be a monotone Lagrangian submanifold with $N_L \geq 2$.
%  Let $H_1,H_2,H_3$ be three Hamiltonian functions.
%  Then for any idempotent $a \in QH_*(L)$, there exist $y_1 \in CF(L;H_1),y_2 \in CF(L;H_2),y_3 \in CF(L;H_3)$ such that
%  \[ \langle y_1 \ast y_2,y_3 \rangle \neq 0,\]
%  and
%  \[ \cA_{L;H_1}(y_1) + \cA_{L;H_2}(y_2) - \cA_{L;H_3}(y_3) \leq c(L;a,H_1) + c(L;a,H_2) - c(L;a,H_3).\]
% \end{lemma}

% \begin{proof}
% Choose $y_1 \in CF(L;H_1)$ and $y_2 \in CF(L;H_2)$ with $PSS_{H_1}(y_1) = PSS_{H_2}(y_2)= a$, $\cA_{L;H_1}(y_1) = c(L;a,H_1)$, and $\cA_{L;H_2}(y_2) = c(L;a,H_2)$.

% \end{proof}

The aim of this section is to prove Theorem \ref{minTriangle}.

Let $L_1,L_2,L_3$ be three pairwise Hamiltonian isotopic monotone Lagrangian submanifolds with $N_{L_1} \geq 2$. Pick Hamiltonian functions $H_{ij},i, j \in \{1,2,3\}$ with $H_{ii} = 0$, generating Hamiltonian flows $\phi_{ij} = \{\phi_{ij}^t\}$ with $\phi_{ij}^1(L_j) = L_i$, and $[\phi_{ij} \# \phi_{jk}] = [\phi_{ik}]$ for any $i,j,k \in \{1,2,3\}$. By $\phi_{ij} \# \phi_{jk}$ we mean the concatenation $\phi_{ij}^t$ for half the time, and $\phi_{ij}^1 \circ \phi_{jk}^t$ for the second half. We denote $[\phi_{ij}]$ to be the corresponding element in the universal cover $\widetilde{\text{Ham}}(M)$, i.e. the equivalence class of $\phi_{ij}$ under homotopy with fixed end-points. Note that a choice of two such non-trivial Hamiltonian functions determines all of them. Also note that $[\{\phi_{ji}^t\}] = [\{\phi_{ij}^{-1} \circ \phi_{ij}^{1-t}\}]$, $[\phi_{ij} \# \phi_{ji}] = \Id$, and $\overline{H}_{ij} = H_{ji} \circ \phi_{ij}^{-1}$. Finally, note that this diagram of Hamiltonian diffeomorphisms is indeed well defined as the following observation shows.
\begin{align*}
[\phi_{ji} \# \phi_{ik}] &= [\{\phi_{ji}^t\} \# \{ \phi_{ji}^1 \circ \phi_{ik}^t\}] \\
&= [ \{ \phi_{ij}^{-1} \phi_{ij}^{1-t} \} \# \{ \phi_{ij}^{-1} \circ \phi_{ij}^t \} \# \{ \phi_{ij}^{-1} \circ \phi_{ij}^1 \circ \phi_{jk}^t \}] \\
&= [\{ \phi_{jk}^t \}] = [\phi_{jk}].
\end{align*}

\begin{lemma}
\label{lemmaSpectralInvariantsLessThanSpectralNorm}
For a choice of $\{L_i\}$ and $\{H_{ij}\}$ as above, for every idempotent $a \in QH_*(L_1)$
denoting $a_i := (\phi_{i1}^1)_* a$,
there exists $i \neq j \neq k \in \{1,2,3\}$ with
\[ c(L_i;a_i,H_{ij}) + c(L_i;a_i,H_{jk} \circ \phi_{ji}^1) - c(L_i;a_i,H_{ik}) \leq \frac{1}{2} \nu(L_1;a,H_{12}) .\] 
\end{lemma}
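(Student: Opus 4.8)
The plan is to reduce the left-hand side to one of a pair of quantities whose sum is exactly $\nu(L_1;a,H_{12})$, after which the elementary fact that the smaller of two reals summing to $s$ is at most $s/2$ finishes the proof; the whole argument runs on the Symplectic invariance property of Proposition \ref{prop:main_properties_Lagr_sp_invts}. For a triple $(i,j,k)$ of pairwise distinct elements of $\{1,2,3\}$ write
\[ T_{ijk} := c(L_i;a_i,H_{ij}) + c(L_i;a_i,H_{jk}\circ\phi_{ji}^1) - c(L_i;a_i,H_{ik}), \]
so the claim is that $T_{ijk}\le\tfrac12\nu(L_1;a,H_{12})$ for a suitable choice of $(i,j,k)$.

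First I would rewrite the middle term. Applying Symplectic invariance with $\psi=\phi_{ij}^1$, which satisfies $\psi(L_j)=L_i$, gives $c(L_j;a_j,H_{jk}) = c\big(L_i;(\phi_{ij}^1)_*a_j,\, H_{jk}\circ(\phi_{ij}^1)^{-1}\big) = c(L_i;(\phi_{ij}^1)_*a_j,\, H_{jk}\circ\phi_{ji}^1)$; and because push-forwards on $QH$ depend only on the homotopy class of the Hamiltonian path and compose under concatenation, the relation $[\phi_{ij}\#\phi_{j1}]=[\phi_{i1}]$ yields $(\phi_{ij}^1)_*a_j = (\phi_{ij}^1)_*(\phi_{j1}^1)_*a = (\phi_{i1}^1)_*a = a_i$. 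Hence $c(L_i;a_i,H_{jk}\circ\phi_{ji}^1) = c(L_j;a_j,H_{jk})$, and
\[ T_{ijk} = c(L_i;a_i,H_{ij}) + c(L_j;a_j,H_{jk}) - c(L_i;a_i,H_{ik}). \]

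Next I would add $T_{ijk}$ and $T_{ikj}$: the four terms carrying the index $i$ cancel in pairs, leaving $T_{ijk}+T_{ikj} = c(L_j;a_j,H_{jk}) + c(L_k;a_k,H_{kj})$. The same computation, now with $\psi=\phi_{jk}^1$ (so $\psi(L_k)=L_j$), rewrites $c(L_k;a_k,H_{kj}) = c(L_j;a_j,H_{kj}\circ\phi_{kj}^1)$, and the identity $\overline{H}_{jk} = H_{kj}\circ\phi_{jk}^{-1} = H_{kj}\circ\phi_{kj}^1$ from the setup turns this into $c(L_j;a_j,\overline{H}_{jk})$. Therefore
\[ T_{ijk}+T_{ikj} = c(L_j;a_j,H_{jk}) + c(L_j;a_j,\overline{H}_{jk}) = \nu(L_j;a_j,H_{jk}). \]
Taking $(i,j,k)=(3,1,2)$ and using $a_1=a$ (as $H_{11}=0$) gives $T_{312}+T_{321}=\nu(L_1;a,H_{12})$, so at least one of $T_{312},T_{321}$ is $\le\tfrac12\nu(L_1;a,H_{12})$; since both $(3,1,2)$ and $(3,2,1)$ are admissible triples, the lemma follows.

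The only delicate point is the bookkeeping: one must verify that the base points and base paths of Section \ref{preliminariesSection}, together with the normalization \eqref{eq-normalization-zero-int}, are chosen consistently for all the $H_{ij}$ so that Symplectic invariance is applied in its action-preserving form, and that the flow identities invoked ($\phi_{ij}^1\phi_{ji}^1=\Id$, $[\phi_{ij}\#\phi_{j1}]=[\phi_{i1}]$, $\overline{H}_{ij}=H_{ji}\circ\phi_{ij}^{-1}$) are used at the level at which they actually hold. I expect this to be the main — but purely routine — obstacle. I note in passing that non-negativity of the $T_{ijk}$, which is what ties them to the actual symplectic areas of $J$-holomorphic triangles, is not needed for this lemma: the inequality $\min(x,y)\le\tfrac12(x+y)$ holds for arbitrary reals.
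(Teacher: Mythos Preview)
Your proof is correct and follows essentially the same approach as the paper: use Symplectic invariance to show that two of the quantities $T_{ijk}$ sum to $\nu(L_1;a,H_{12})$, then take the smaller one. The only difference is cosmetic --- the paper works with the pair $(T_{123},T_{213})$ (it computes $A+B$ directly by pushing each term of $B$ back to $L_1$ via $\phi_{12}^1$), whereas you first rewrite every $T_{ijk}$ in the symmetric form $c(L_i;a_i,H_{ij})+c(L_j;a_j,H_{jk})-c(L_i;a_i,H_{ik})$ and then pair $(T_{312},T_{321})$; both pairs collapse to $c(L_1;a_1,H_{12})+c(L_1;a_1,\overline{H}_{12})$.
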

\begin{proof}
Denote 
\[ A = c(L_1;a_1,H_{12}) + c(L_1;a_1,H_{23} \circ \phi_{21}^1) - c(L_1;a_1,H_{13}),\]
\[ B = c(L_2;a_2,H_{21}) + c(L_2;a_2,H_{13} \circ \phi_{12}^1) - c(L_2;a_2,H_{23}) .\]
We use the equation
\[ c(L;a,H) = c(\phi(L);\phi_* a,H \circ \phi^{-1}),\]
and the commutativity of the diagram $\{H_{ij}\}$ to get
\[ c(L_2;a_2,H_{23}) = c(L_1;a_1,H_{23} \circ \phi_{21}^1),\]
\[ c(L_2;a_2,H_{13} \circ \phi_{12}^1) = c(L_1;a_1,H_{13}),\]
\[ c(L_2;a_2,H_{21}) = c(L_1;a_1,H_{21} \circ \phi_{21}^1) = c(L_1;a_1,\overline{H}_{12}).\]
Hence
\[ A + B = c(L_1;a_1,H_{12}) + c(L_1;a_1,\overline{H}_{12}) = \nu(L_1;a,H_{12}).\]
We get that either $A \leq \frac{1}{2} \nu(L_1;a,H_{12})$, or $B \leq \frac{1}{2} \nu(L_1;a,H_{12})$.

\end{proof}

\begin{lemma}
\label{lemmaTriangleLessThanAction}
For a choice of $\{L_i\}$ and $\{H_{ij}\}$ as above, for any $y_{12} \in CF(L_1;H_{12}), y_{23} \in CF(L_1;H_{23} \circ \phi_{21}^1)$, there exists a pseudo holomorphic triangle $\Delta \in \mathcal{D}(L_1,L_2,L_3)$, with
\[ \area(\Delta) \leq \cA_{L_1;H_{12}}(y_{12}) + \cA_{L_1;H_{23} \circ \phi_{21}^1}(y_{23}) - \cA_{L_1;H_{13}}(y_{12} \ast y_{23}) .\]
\end{lemma}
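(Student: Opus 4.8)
The plan is to exploit the area identity already recorded in the excerpt, namely that for a pseudo-holomorphic triangle $u$ contributing to the product one has $\int_{D^2} u^*\omega = \cA(\tilde x_{12}) + \cA(\tilde x_{23}) - \cA(\tilde x_{13})$, together with the equivalence between the ``Hamiltonian chords'' and ``Lagrangian intersections'' pictures established in Section~\ref{preliminariesSection}. First I would transport the chord-theoretic generators $y_{12} \in CF(L_1;H_{12})$ and $y_{23} \in CF(L_1;H_{23}\circ\phi_{21}^1)$ to the intersection-point picture: applying $(\Phi_{H_{12}})^{-1}$ gives a capped intersection point $\tilde x_{12} \in CF(L_1,L_2)$, and applying the correspondence $CF(L_1,H) \leftrightarrow CF(\phi(L_1),H\circ\phi^{-1})$ followed by $(\Phi_{\cdot})^{-1}$ turns $y_{23}$ into a capped intersection point $\tilde x_{23} \in CF(L_2,L_3)$; since all the identifications are action-preserving (condition \eqref{eq-normalization-zero-int}) we have $\cA_{L_1;H_{12}}(y_{12}) = \cA(\tilde x_{12})$ and $\cA_{L_1;H_{23}\circ\phi_{21}^1}(y_{23}) = \cA(\tilde x_{23})$, and the product corresponds, coefficient by coefficient, to $\tilde x_{12} \ast \tilde x_{23} \in CF(L_1,L_3)$, which transports back to $y_{12}\ast y_{23} \in CF(L_1;H_{13})$ with the same action.

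Next I would expand $y_{12}\ast y_{23} = \sum_\ell \tilde x_{13}^{(\ell)}$ as a sum of capped intersection points appearing with nonzero coefficient. By definition of the action of a sum as a maximum, there is an index $\ell_0$ with $\cA(\tilde x_{13}^{(\ell_0)}) = \cA_{L_1;H_{13}}(y_{12}\ast y_{23})$; fix this summand $\tilde x_{13} := \tilde x_{13}^{(\ell_0)}$. Since its coefficient $\langle \tilde x_{12}\ast \tilde x_{23}, \tilde x_{13}\rangle$ is nonzero, the count defining that coefficient is nonempty, so there exists a $J$-holomorphic map $u : D^2\setminus\{z_1,z_2,z_3\}\to M$ with the three boundary arcs on $L_1,L_2,L_3$, asymptotic to $x_{12},x_{23},x_{13}$, and with $[\tilde x_{12}\#\tilde x_{23}\# u] = [\tilde x_{13}]$. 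This $u$ (together with a conformal identification of $D^2\setminus\{z_1,z_2,z_3\}$ with the standard triangle, reparametrized if necessary to be immersed after a small perturbation, or simply taken directly) is the desired element $\Delta \in \cD(L_1,L_2,L_3)$.

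Finally I would apply the area identity: $\area(\Delta) = \int_{D^2} u^*\omega = \cA(\tilde x_{12}) + \cA(\tilde x_{23}) - \cA(\tilde x_{13})$, and substitute the three action equalities from the first step, obtaining exactly
\[ \area(\Delta) = \cA_{L_1;H_{12}}(y_{12}) + \cA_{L_1;H_{23}\circ\phi_{21}^1}(y_{23}) - \cA_{L_1;H_{13}}(y_{12}\ast y_{23}). \]
(In fact one gets equality for this particular $\Delta$; the inequality in the statement is the safe formulation, and also accommodates the case where one prefers to bound $\cA(\tilde x_{13}^{(\ell_0)})$ from above by the max.) The main obstacle is bookkeeping rather than conceptual: one must be careful that the chain of identifications $y_{12}\mapsto \tilde x_{12}$, $y_{23}\mapsto\tilde x_{23}$, $\tilde x_{12}\ast\tilde x_{23}\mapsto y_{12}\ast y_{23}$ all use a \emph{consistent} choice of base point $y\in L_1$, base paths $\gamma_{L_i}$, and Hamiltonians vanishing near $t=0,1$ — precisely the compatible choices spelled out at the end of Section~\ref{preliminariesSection} — so that the product on the intersection side genuinely corresponds to the product $y_{12}\ast y_{23}$ computed on the chord side; and one must ensure the asymptotic/capping condition $[\tilde x_{12}\#\tilde x_{23}\#u]=[\tilde x_{13}]$ is the one that makes the area identity hold with the correct signs. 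A secondary point is passing from the (a priori only continuous at the punctures, and possibly non-immersed) Floer triangle $u$ to a genuine immersed element of $\cD$; this is handled by the standard regularity and perturbation arguments for the relevant moduli spaces, and does not affect the area bound.
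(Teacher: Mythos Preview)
Your overall approach is the same as the paper's --- use the equivalence of the two Floer pictures to convert to the intersection-point product, pick out the maximal-action summand of $y_{12}\ast y_{23}$, and extract a genuine $J$-holomorphic triangle from the nonvanishing of the corresponding product coefficient --- but there is one real gap in your write-up.

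You treat $y_{12}$ and $y_{23}$ as if they were single capped generators: you say $(\Phi_{H_{12}})^{-1}$ sends $y_{12}$ to ``a capped intersection point $\tilde x_{12}$'' and then invoke the area identity $\area(\Delta) = \cA(\tilde x_{12}) + \cA(\tilde x_{23}) - \cA(\tilde x_{13})$, finally claiming equality in the lemma's bound. But $y_{12}, y_{23}$ are arbitrary \emph{chains} in $CF$, i.e.\ $\Z_2$-linear combinations of capped generators; this is precisely how the lemma is used in the proof of Theorem~\ref{minTriangle}, where the $y_{ij}$ are PSS representatives of a quantum homology class. The nonvanishing of $\langle y_{12}\ast y_{23}, \tilde x_{13}\rangle$ does not give you a triangle asymptotic to ``$x_{12}, x_{23}, x_{13}$''; it only guarantees that there exist individual generators $(\alpha_{12},[\tilde\alpha_{12}])$ appearing in $y_{12}$ and $(\alpha_{23},[\tilde\alpha_{23}])$ appearing in $y_{23}$ for which $\tilde x_{13}$ appears in $(\alpha_{12},[\tilde\alpha_{12}])\ast(\alpha_{23},[\tilde\alpha_{23}])$. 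The resulting triangle then has area
\[
\cA(\alpha_{12},[\tilde\alpha_{12}]) + \cA(\alpha_{23},[\tilde\alpha_{23}]) - \cA(\tilde x_{13}),
\]
and to get the stated bound you still need the observation that $\cA(\alpha_{12},[\tilde\alpha_{12}]) \leq \cA_{L_1;H_{12}}(y_{12})$ and $\cA(\alpha_{23},[\tilde\alpha_{23}]) \leq \cA_{L_1;H_{23}\circ\phi_{21}^1}(y_{23})$, since the action of a chain is by definition the maximum over its summands. This is why the conclusion is a genuine inequality, not the equality you assert. The paper's proof carries out exactly this extraction-of-individual-generators step explicitly; once you insert it, your argument and the paper's coincide.
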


\begin{proof}
Choose a critical point (i.e. Hamiltonian chord with capping) $(\alpha_{13},[\tilde{\alpha}_{13}])$ in $y_{12} \ast y_{23}$ with action $\cA_{L_1;H_{13}}(\alpha_{13},[\tilde{\alpha}_{13}]) = \cA_{L_1;H_{13}}(y_{12} \ast y_{23})$.
There are critical points $(\alpha_{12},[\tilde{\alpha}_{12}]) \in y_{12},(\alpha_{23},[\tilde{\alpha}_{23}]) \in y_{23}$ so that 
\[ \cA_{L_1;H_{12}}(\alpha_{12},[\tilde{\alpha}_{12}]) \leq \cA_{L_1;H_{12}}(y_{12}), \]
\[ \cA_{L_1;H_{23}\circ \phi_{21}^1}(\alpha_{23},[\tilde{\alpha}_{23}]) \leq \cA_{L_1;H_{23} \circ \phi_{21}^1}(y_{23}), \]
\[ (\alpha_{13},[\tilde{\alpha}_{13}]) \in (\alpha_{12},[\tilde{\alpha}_{12}]) \ast (\alpha_{23},[\tilde{\alpha}_{23}]). \]

From the equivalence between the Floer complexes $CF(L_i;H_{ij})$ and $CF(L_i,L_j)$ (see Section \ref{preliminariesSection}), we know that there are intersection points with cappings $(x_{12},[\tilde{x}_{12}]),$ $(x_{23},[\tilde{x}_{23}]),$ $(x_{13},[\tilde{x}_{13}])$ that match to $(\alpha_{12},[\tilde{\alpha}_{12}]),(\alpha_{23},[\tilde{\alpha}_{23}]),(\alpha_{13},[\tilde{\alpha}_{13}])$ respectively and have the same action and the same product. From the definition of the product, there exists a $J$-holomorphic triangle with symplectic area 
\[ \cA(x_{12},[\tilde{x}_{12}]) + \cA(x_{23},[\tilde{x}_{23}]) - \cA(x_{13},[\tilde{x}_{13}]) = \]
\[ \cA_{L_1;H_{12}}(\alpha_{12},[\tilde{\alpha}_{12}]) + \cA_{L_1;H_{23} \circ \phi_{21}^1}(\alpha_{23},[\tilde{\alpha}_{23}]) - \cA_{L_1;H_{13}}(\alpha_{13},[\tilde{\alpha}_{13}]) \leq \] 
\[\cA_{L_1;H_{12}}(y_{12}) + \cA_{L_1;H_{23} \circ \phi_{21}^1}(y_{23}) - \cA_{L_1;H_{13}}(y_{12} \ast y_{23}) .\]
This completes the proof.

\end{proof}

\begin{proof}[Proof of Theorem \ref{minTriangle}]
Let $L_1,L_2$ be as in the formulation of the theorem.
Fix an idempotent $a \in QH_* (L_1).$
Let $L_3$ be another Lagrangian Hamiltonian isotopic to $L_1$ that intersects $L_1,L_2$ transversely, as in the definition of $\cS(L_1,L_2)$.
Let $H:=H_{12}$ be a Hamiltonian generating a Hamiltonian flow $\phi_{12}$ with $\phi_{12}^1(L_2) = L_1$. Let $H_{13}$ be any Hamiltonian generating $\phi_{13}$ with $\phi_{13}^1(L_3) = L_1$. Complete $H_{12}$ and $H_{13}$ to a commutative diagram $\{H_{ij}\}$ as described in the beginning of Section \ref{holomorphicTrianglesSection}.
From Lemma \ref{lemmaSpectralInvariantsLessThanSpectralNorm}, there exists a choice of $i \neq j \neq k \in \{1,2,3\}$ with
\[ c(L_i;a_i,H_{ij}) + c(L_i;a_i,H_{jk} \circ \phi_{ji}^1) - c(L_i;a_i,H_{ik}) \leq \frac{1}{2} \nu(L;a,H) .\] 
Recall that $a_i:=(\phi_{i1}^1)_* a.$
Let $y_{ij} \in CF_*(L_i;H_{ij})$, $y_{jk} \in CF_*(L_i;H_{jk} \circ \phi_{ji}^1)$ be elements which are mapped under the PSS-isomorphism to $a_i$, and have actions $\cA_{L_i;H_{ij}}(y_{ij}) = c(L_i;a_i,H_{ij}), \cA_{L_i;H_{jk} \circ \phi_{ji}^1}(y_{jk}) = c(L_i;a_i,H_{jk}\circ \phi_{ji}^1).$
Note that $y_{ij} \ast y_{jk} \in CF_*(L_i;H_{ik})$ maps to $a_i \ast a_i = a_i$ under the PSS-isomorphism. Hence $\cA_{L_i;H_{ik}}(y_{ij} \ast y_{jk}) \geq c(L_i;a_i,H_{ik}).$ We get that
\[ \cA_{L_i;H_{ij}}(y_{ij}) + \cA_{L_i;H_{jk} \circ \phi_{ji}^1}(y_{jk}) - \cA_{L_i;H_{ik}}(y_{ij}\ast y_{jk}) \leq c(L_i;a_i,H_{ij}) + c(L_i;a_i,H_{jk}\circ \phi_{ji}^1) - c(L_i;a_i,H_{ik}).\]
Finally from Lemma \ref{lemmaTriangleLessThanAction}, there exists a pseudo-holomorphic triangle $\Delta \in \mathcal{D}(L_1,L_2,L_3)$, with
\[ \area(\Delta) \leq \cA_{L_i;H_{ij}}(y_{ij}) + \cA_{L_i;H_{jk} \circ \phi_{ji}^1}(y_{jk}) - \cA_{L_i;H_{ik}}(y_{ij}\ast y_{jk}).\]
Hence
\[ \inf \{\area(\Delta)\} \leq \frac{1}{2} \nu(L_1;a,H).\]
This is true for any $L_3$ and $H$ and hence
\[ \cS(L_1,L_2) \leq \frac{1}{2} \nu(L_1,L_2;a).\]

\end{proof}

\section{Triangles with small area in $S^2$}
\label{trianglesS2Section}

We consider $S^2$ equipped with the Euclidean area form $\omega$ normalized by $\int_{S^2} \omega = 1$.

We define \emph{equator} to be a closed curve in $S^2$ which is Hamiltonian isotopic to a great circle. A $median$ is an immersed curve $\gamma$
such that the area of each connected component of $S^2 \setminus \gamma$ is not greater than $1/2$. This way, equators are median curves. Other examples
include number eight figures with appropriate area constraints. We will always assume that a median has transverse self-intersections (if any).

Given three medians denoted by $L_R, L_G, L_B$ we will refer to them as the ``red'', ``green'' and ``blue'', respectively. We assume that they are in general position, that is, 
intersections are transverse and there are no triple intersection points.

Let $D \subset S^2$ be a closed disk, $\gamma : [a, b] \to D$ a curve. $\gamma$ is a \emph{chord} if $\gamma$ is simple, $\gamma ((a, b)) \subset \text{int}(D)$ and 
$\gamma(a), \gamma (b) \in \partial D$.
\begin{figure}
\centering
\includegraphics[width=0.5\textwidth]{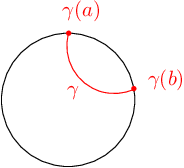}
\caption{A chord}
\end{figure}

\begin{proposition}
\label{s2triangleProp}
Let $L_R, L_G, L_B \subset S^2$ be three median lines in general position. Then there exists an immersed triangle $u : \Delta \to S^2$ whose sides 
lie on arcs of the medians and which has area $\area (u) = \int_\Delta u^*\omega \leq \frac{1}{8}$.
\end{proposition}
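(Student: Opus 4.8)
The plan is to argue combinatorially/topologically on $S^2$, entirely within the 2-dimensional setting, without invoking Floer theory. The main idea: find a point $x$ lying on all three medians or, failing that, find a small ``triple region'' bounded by arcs of the three medians. Concretely, I would first reduce to the case where each median is either an equator or a figure-eight (the only generic shapes among median curves, as noted after the definition of $\zeta$). Then I would exploit the defining area property: each median $L$ splits $S^2$ into regions of area $\le \tfrac12$, so any two of the three regions cut out by a median have total area $1$ and hence one of them has area $\ge \tfrac12$, forcing equality when there are exactly two. The heart of the argument is a counting/parity statement: I want to show that the three medians, being in general position, must bound a ``triangle'' — an immersed disk $u:\Delta\to S^2$ whose three boundary arcs lie on $L_R$, $L_G$, $L_B$ respectively — of area at most $\tfrac18$.

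The key steps, in order. (1) Set up the combinatorics: the union $L_R\cup L_G\cup L_B$ is a 4-valent graph on $S^2$ (away from the self-intersections of the figure-eights, which are also 4-valent), so it decomposes $S^2$ into faces, edges and vertices; by Euler's formula one controls the number of faces. (2) Among the faces, consider those that are ``triangular'' in the sense that their boundary alternates among the three colors; an averaging argument should show such a face exists and, more importantly, that one can choose a triangular region of area $\le \tfrac18$. The mechanism I expect to use: look at the three median curves pairwise; each pair $L_R,L_G$ divides $S^2$ into regions; superimposing the third curve $L_B$ and using that $L_B$ also has the area-halving property, one finds a region in the common refinement that is cut ``evenly'' by all three. (3) Quantify: a region bounded by arcs of all three medians, sitting inside the intersection of three half-area pieces, should have area at most $\tfrac12\cdot\tfrac12\cdot\tfrac12 = \tfrac18$ — this is where the constant $\tfrac18$ comes from, and it matches the bound $\tfrac12\nu \le \tfrac12\cdot\tfrac14 = \tfrac18$ from Theorem~\ref{minTriangle} and the $\CP^1$ computation $\nu\le\tfrac14$. (4) Finally, realize this region as the image of an immersed triangle $u:\Delta\to S^2$: if the region is an embedded disk bounded by exactly three arcs this is immediate; if its boundary is more complicated one must cut it into genuine triangles and pick the smallest, or pass to a figure-eight vertex as the triangle's corner.

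The main obstacle I anticipate is Step (2)–(4): making precise the claim that there is always a region bounded by arcs of all three medians with area $\le\tfrac18$, and handling the figure-eight case where the relevant ``triangle'' may degenerate (two of its vertices colliding at a self-intersection point of a median, or one side having length zero). In the all-equator case the argument should be clean — three embedded circles, each bisecting the area, in general position, and one checks directly that they bound a small curvilinear triangle; this is essentially the geometric content already extracted from Theorem~\ref{minTriangle} in the Remark after Theorem~\ref{median_theorem}. The genuinely new work is the figure-eight case: here I would treat a figure-eight as a limit of (or a curve closely related to) two tangent circles, or argue directly that its two ``petals'' each have area $\le\tfrac12$ and use the self-intersection point as a natural vertex for the triangle, reducing the area budget accordingly. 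I would also need to verify orientations and that the immersed triangle can be taken with the correct boundary orientation so that $\area(u)=\int_\Delta u^*\omega \ge 0$, which should follow from choosing the region on the correct side of each oriented arc.
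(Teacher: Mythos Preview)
Your high-level strategy --- a direct combinatorial/topological argument on $S^2$, avoiding Floer theory --- is the same as the paper's. But the mechanism you propose for extracting the constant $\tfrac18$ does not work, and the actual argument is considerably more delicate than what you outline.

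The central gap is Step~(3). The heuristic ``a region sitting inside the intersection of three half-area pieces should have area at most $\tfrac12\cdot\tfrac12\cdot\tfrac12=\tfrac18$'' is simply false: the intersection of three subsets of $S^2$ each of area $\tfrac12$ can have area anywhere in $[0,\tfrac12]$, and there is no multiplicativity without some independence hypothesis that you never establish. Likewise, the Euler-formula face-counting in Step~(2) gives no control on areas; even if you locate a triangular face, nothing in your argument bounds its area by $\tfrac18$ rather than, say, $\tfrac14$. Indeed the paper observes (in a remark after its Step~I) that a straightforward argument gives a triangle of area at most $\tfrac14$, and that the remaining work is precisely to push this down to $\tfrac18$.

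What the paper actually does is a case analysis. First, if two chords (arcs of two medians inside a disk bounded by the third) form a \emph{crossing}, they cut that disk into four disjoint triangles and the smallest has area $\le\tfrac18$. The hard case is when no crossings occur anywhere; then the paper lifts to the universal cover of the union of two complementary disks, shows the third median lifts to a closed loop there, introduces a notion of \emph{essential chords} in the complement of that loop, and does careful area bookkeeping (including auxiliary ``ear'' regions) to locate two disjoint triangles inside a region of area $\le\tfrac14$. None of this structure is present in your proposal. Finally, contrary to your expectation, the figure-eight case is \emph{not} where the difficulty lies: the paper handles it in a short final step by noting the equator argument goes through with minor modifications, since $\area(\Delta),\area(B)\le\tfrac12$ and $\area(\Delta_R)\ge\tfrac12$ still hold.
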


\begin{remark}
  When all the medians are equators, this proposition follows from Theorem \ref{minTriangle}. Here we give a different more direct proof of topological nature that also works in the case that some of the medians are figure eights.
\end{remark}

\begin{remark}
  When some medians are not equators, we allow sides of a triangle to be kinked at self-intersection points. 
\end{remark}
\begin{remark}
  The bound $1/8$ is sharp - it is obtained for three orthogonal great circles on $S^2$.
\end{remark}

\begin{proof}[Proof of Proposition \ref{s2triangleProp}] we analyze various configurations of arcs and intersection points and verify that they imply existence of 
triangles whose area is bounded by $\frac{1}{8}$. We also show that under assumptions of the proposition one of these configurations is guaranteed to arise.
In order to simplify the argument we will divide it into several steps. We will also assume first that the three medians are equators, and after we finish the argument 
for this special case we will explain modifications needed to extend it to medians in general.

The green equator $L_G$ divides $S^2$ into two disks of area $1/2$. Denote by $\Delta$ one of these disks.
$L_B$ restricted to $\Delta$ is a system of disjoint chords (see Figure \ref{chords_sys_fig}). The same holds for $L_R$. $L_B \cap L_R \neq \emptyset$
due to area considerations, hence there exists an intersection point either in $\Delta$ or in its complement.
Without loss of generality we assume that there exists such an intersection point $p \in L_B \cap L_R$ in $\Delta$.
Denote by $\gamma_R, \gamma_B$ the chords of $L_R \cap \Delta$, $L_B \cap \Delta$ which pass through $p$. Let
$a_R, b_R, a_B, b_B$ be the endpoints of $\gamma_R, \gamma_B$, respectively.

\begin{figure}
    \centering
    \includegraphics[width=0.5\textwidth]{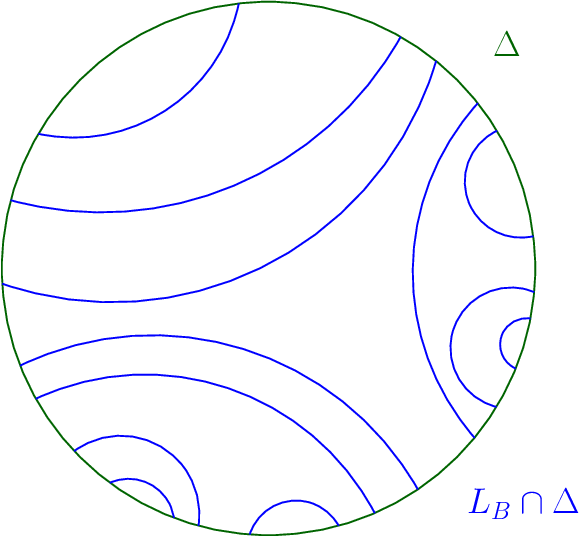}
    \caption{$L_B$ inside $\Delta$}
    \label{chords_sys_fig}
\end{figure}

\medskip

\underline{Step I:}
We say that the chords $\gamma_R, \gamma_B$ form a \emph{crossing} in $\Delta$ if the endpoints $a_R, b_R$ of $\gamma_R$ belong to
different connected components of $\Delta \setminus \gamma_B$ (see Figure \ref{crossing_fig}).

\begin{figure}
    \centering
    \includegraphics[width=0.5\textwidth]{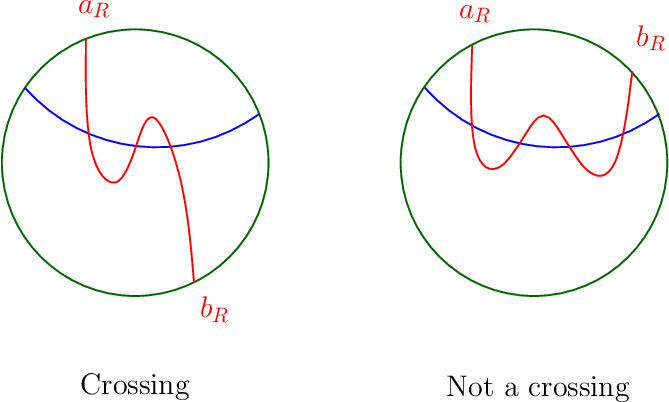}
    \caption{Examples for crossings}
    \label{crossing_fig}
\end{figure}

In the case of a crossing of $\gamma_R, \gamma_B$, there is an embedded triangle of area not greater than $\frac{1}{8}$:
$\gamma_B$ cuts $\Delta$ into two digons (we define digon to be a smooth embedding of a unit half-disk).
Orient $\gamma_R$ in direction from $a_R$ to $b_R$. Denote by $a_1$ its first intersection point with $\gamma_B$, by $b_1$ the last one.
(In the case when $p$ is the only intersection point we get $a_1=b_1=p$).

Now we note that the two arcs $[a_R, a_1], [b_1, b_R] \subset \gamma_R$ cut the two digons of $\Delta \setminus \gamma_B$ into four disjoint
triangles (see Figure \ref{crossing_triangles_fig}), hence the smallest has area at most $\area(\Delta)/4 = \frac{1}{8}$.

\begin{figure}
    \centering
    \includegraphics[width=0.5\textwidth]{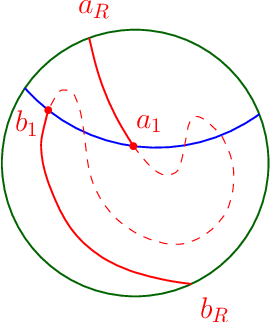}
    \caption{Arcs that cut $\Delta \setminus \gamma_B$ into four disjoint triangles}
    \label{crossing_triangles_fig}
\end{figure}

\begin{remark}
  When $\gamma_R, \gamma_B$ do not form a crossing, by a similar argument we can cut one of the digons (the one which contains both $a_R, b_R$) into two disjoint triangles.
  This way we immediately get an embedded triangle of area at most $\frac{1}{4}$. The remaining steps are to show that even in the case without crossings we can still obtain 
  the area bound of $\frac{1}{8}$ at the cost of considering immersed triangles (and not just embedded ones).
\end{remark}

\medskip

\underline{Step II:}
We assume there are no crossings (neither in $\Delta$, nor in its complement). This situation is possible: in Figure \ref{no_cross_fig} there are 
no crossings regardless of a choice of colors for the three equators.

\begin{figure}
    \centering
    \includegraphics[width=0.5\textwidth]{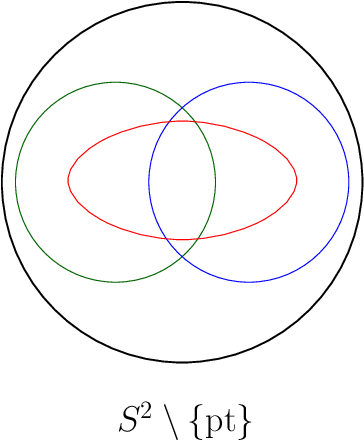}
    \caption{Example where there are no crossings}
    \label{no_cross_fig}
\end{figure}

Denote by $R$ the connected component of $\Delta \setminus L_B$ which contains $a_R$. Since there are no crossings, $b_R$ must belong to $R$ as well (see Figure \ref{region_fig}).
\begin{figure}
    \centering
    \includegraphics[width=0.5\textwidth]{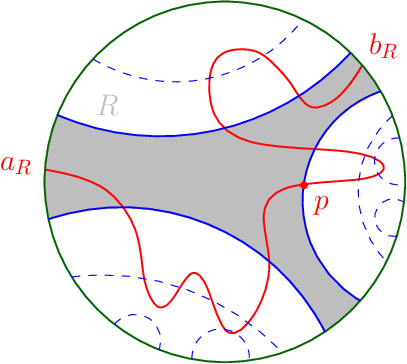}
    \caption{The connected component $B$ of $S^2 \setminus L_B$ that contains $a_R$ and $b_R$}
    \label{region_fig}
\end{figure}

$S^2 \setminus L_B$ consists of two disks. Denote by $B$ the one which contains the region $R$. In particular, both $a_R, b_R \in B$. 
A similar argument can be applied to each arc in $L_R \setminus L_G$, the endpoints must both lie either in $B$ or in its complement. Hence, each pair of 
consecutive (along $L_R$) intersections of $L_R \cap L_G$ must belong either to $B$ or to $S^2 \setminus B$. As $a_R \in B$ by definition, the same holds for all other 
intersection points: $L_R \cap L_G \subset B$.

$B$ may intersect $\Delta$ in a complicated way.  In order to simplify the setup, we lift the construction to the universal cover of $\Delta \cup B$:
consider a lift $\tilde{p}$ of $p \in L_R \cap L_B \cap \Delta$. Denote by $\widetilde{L_R}, \widetilde{L_B}, \widetilde{R}, \widetilde{\Delta}, \widetilde{B}$ the unique
lifts of the corresponding curves and regions which contain $\tilde{p}$ (see Figure \ref{lift_fig}). 
% If $L_R \nsubseteq \Delta \cup B$, we denote by $\widetilde{L_R}$ the lift of the arc
% of $L_R \cap (\Delta \cup B)$ which passes through $\tilde{p}$ (see Figure \ref{red_lift_fig}). 

\begin{figure}
    \centering
    \includegraphics[width=0.5\textwidth]{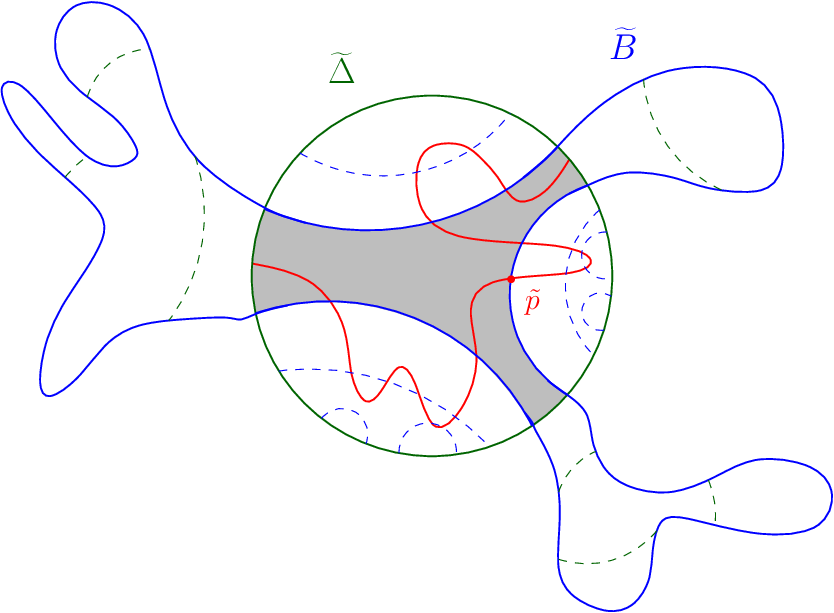}
    \caption{The universal cover of $B \cup \Delta$}
    \label{lift_fig}
\end{figure}

We claim that if we trace $\widetilde{L_R}$ starting from $\tilde{p}$ in either direction, we will never meet the boundary of $\widetilde{\Delta} \cup \widetilde{B}$. 
This implies $\widetilde{L_R}$ is not just an arc but a closed loop which is entirely contained in the interior of $\widetilde{\Delta} \cup \widetilde{B}$. 
% In this case its projection $L_R$ also lies entirely in $\Delta \cup B$. 
Assume by contradiction that the claim is false. We restrict $\widetilde{L_R}$ to a chord in $\widetilde{\Delta} \cup \widetilde{B}$ which contains $\tilde{p}$. 
We continue to refer to this arc by the same $\widetilde{L_R}$.
The endpoints of $\widetilde{L_R}$ lie in $\partial (\widetilde{\Delta} \cup \widetilde{B})$. Since $L_R \cap \partial \Delta \subset \text{int} (B)$,
the endpoints belong to $\partial{\widetilde{B}} \setminus \widetilde{\Delta}$.
Hence the intersections $\widetilde{L_R} \cap \widetilde{L_B}$ lie both outside $\widetilde{\Delta}$ (e.g. the endpoints of $\widetilde{L_R}$) and inside (e.g. $\tilde{p}$).
Denote by $a, b$ two consequent (along $\widetilde{L_R}$) intersections such that $a \notin \widetilde{\Delta}$, $b \in \widetilde{\Delta}$ (see Figure \ref{red_lift_fig}).
Then in the disk $\widetilde{B}$ there is a crossing of the chord $[a, b] \subset \widetilde{L_R}$ and an arc of $\widetilde{L_G}$, which projects to a crossing in $B$
hence contradicts the initial assumption of Step II.

\begin{figure}
    \centering
    \includegraphics[width=0.5\textwidth]{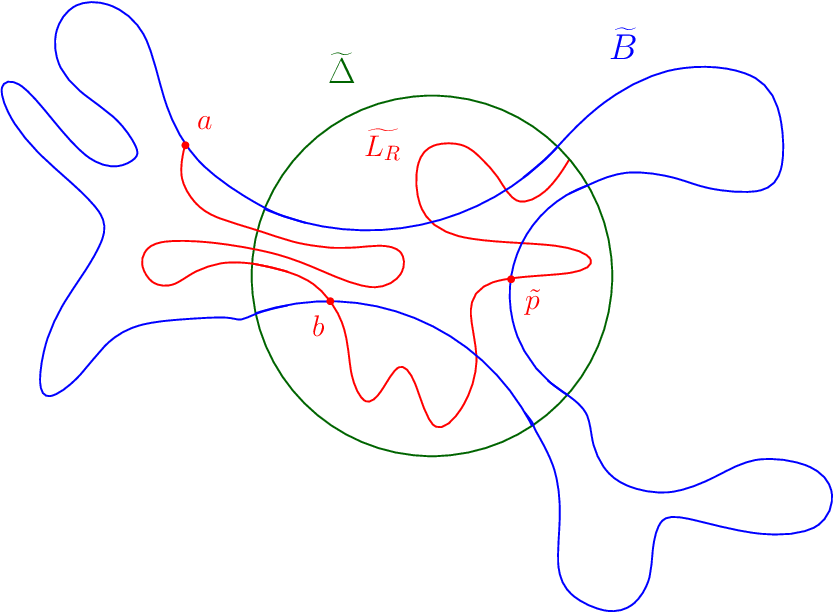}
    \caption{The arc $\widetilde{L}_R$}
    \label{red_lift_fig}
\end{figure}

% Note that if one finds an embedded triangle either in $\widetilde{\Detla}$ or in
% $\widetilde{B}$, it will project to $S^2$ to an embedded triangle of the same area. This holds since the disks $\widetilde{\Delta}, \widetilde{B}$ bijectively
% project to $\Delta, B \subset S^2$, respectively. Therefore it is enough to show the statement in the covering space 

Note that if one finds an immersed triangle in $\widetilde{\Delta} \cup \widetilde{B}$, it projects into $S^2$ to an immersed triangle of the same area. 
Therefore it is enough to show the claim in the covering space which has much simpler topology than $\Delta \cup B$. From now on we continue 
solely in the covering space. In order to shorten the notation we won't keep tildes attached to labels anymore.

Since the lifted $L_R \subset \Delta \cup B$, it implies $L_R \cap L_G \subset B \cap \Delta = R$, $L_R \cap L_B \subset \Delta \cap B = R$.
Hence all intersections of $L_R$ with the the other two equators occur in $\partial R$ (see Figure \ref{red_lift_int_fig}). Denote by $\Delta_R$ the disk bounded by $L_R$.

\begin{figure}
    \centering
    \includegraphics[width=0.5\textwidth]{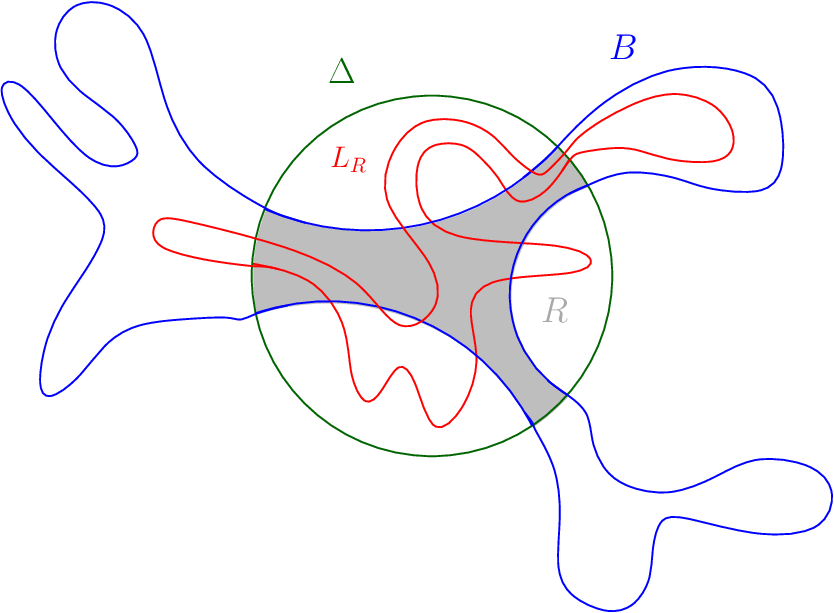}
    \caption{All of the intersections of $L_R$ with $L_B$ or with $L_G$ are in $\partial R$}
    \label{red_lift_int_fig}
\end{figure}

As the summary of the discussion above we obtain the disk $\Delta_R$ contained in the interior of $\Delta \cup B$ where the union $\Delta \cup B$
is simply connected.

\medskip

\underline{Step III:}
From now on we present $\Delta_R$ as a round disk and $L_G, L_B$ as a system of chords in the interior and the exterior of $\Delta_R$.
Note that $L_G, L_B$ (or $\Delta$, $B$) play the same role hence may be swapped in the arguments.

$L_B \setminus \Delta_R$ is a system of non-intersecting chords in the complement of $\Delta_R$. These chords may intersect with 
chords of $L_G \setminus \Delta_R$.
For a chord $\gamma \subseteq L_B \setminus \Delta_R$ (or a chord in $L_G \setminus \Delta_R$) define its \emph{base} $\alpha_\gamma$
to be the arc of $L_R$ which has the same endpoints as $\gamma$ and is homotopic relative endpoints to $\gamma$ in the complement of $\text{int}(\Delta_R)$ (see Figure \ref{chords_ext_fig}).

\begin{figure}
    \centering
    \includegraphics[width=0.5\textwidth]{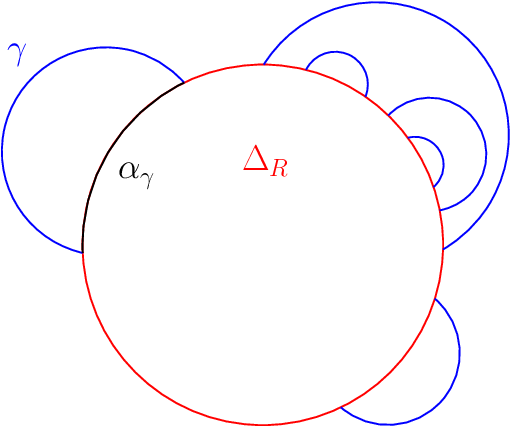}
    \caption{The chord $\gamma$ and its base $a_\gamma$}
    \label{chords_ext_fig}
\end{figure}

A chord is called not nested or \emph{essential} if its base is not contained in the base of any other chord. Clearly, the base of every chord
is contained in the base of some essential chord (see Figure \ref{chords_ess_fig}).

\begin{figure}
    \centering
    \includegraphics[width=0.5\textwidth]{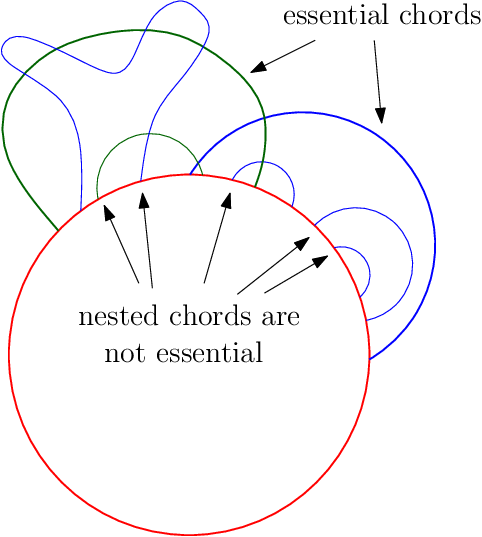}
    \caption{Essential and not essential chords}
    \label{chords_ess_fig}
\end{figure}

Since $L_R \subset \Delta \cup B$, any point $x \in L_R$ belongs to the base of some chord, and hence to the base of some essential chord. 
Therefore bases of essential chords provide partially overlapping (near ends) cover of $L_R$. As chords of the same color do not intersect,
``blue'' and ``green'' essential bases appear in alternating order.

The union of all essential chords encloses a region in $\R^2$. Naturally, this union is a subset of $\Delta \cup B$. No intersection of $L_B \cap L_G$
can occur in the interior of this region: $\Delta \cup B$ is simply connected and near each intersection point one of the four quadrants belongs to the 
complement of $\Delta \cup B$ (see Figure \ref{int_fig}). In particular, the entire region bounded by essential arcs is contained in $\Delta \cup B$.

\begin{figure}
    \centering
    \includegraphics[width=0.5\textwidth]{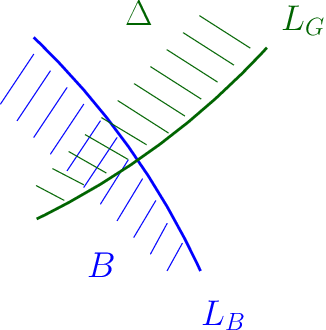}
    \caption{An intersection point of $L_B$ and $L_G$. One quadrant is outside $B \cup \Delta$}
    \label{int_fig}
\end{figure}

Note that each chord and its base bound a digon embedded in $(\Delta \cup B) \setminus \Delta_R$. If this chord is essential, the digon is cut by the 
subsequent (along $L_R$) essential chord into two disjoint triangles. Any two essential chords of the same color are disjoint, the same holds for their 
corresponding digons (see Figure \ref{chords_int_fig}). Hence if there are at least two ``blue'' essential chords, we have two disjoint digons (four disjoint triangles)
in $(\Delta \cup B) \setminus \Delta_R$.

\begin{figure}
    \centering
    \includegraphics[width=0.5\textwidth]{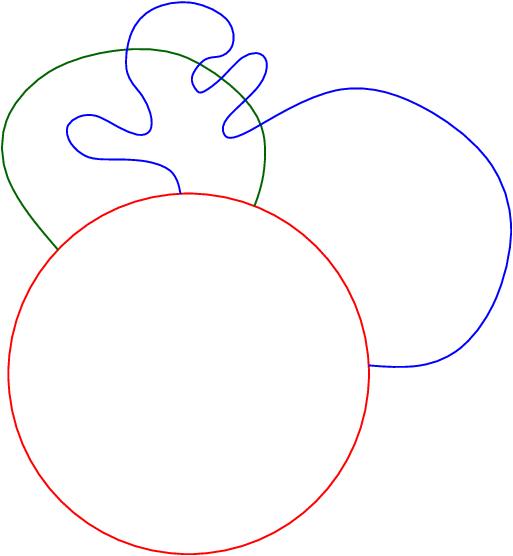}
    \caption{Examples of green and blue adjacent digons where each digon is cut into two triangles}
    \label{chords_int_fig}
\end{figure}

\[
  \begin{array}{rcl}\area((\Delta \cup B) \setminus \Delta_R) &=& \area(\Delta) + \area(B) - \area (\Delta \cap B) - \area (\Delta_R) \\
		&=& \frac{1}{2} + \frac{1}{2} - \area (\Delta \cap B) -\frac{1}{2} < \frac{1}{2}. 
  \end{array}
\]
It follows that there is a triangle of area less than $\frac{1/2}{4} = \frac{1}{8}$.

The same argument holds for the ``green'' essential chords. Hence the only case left to consider is when there is a single blue and a single green essential chord.
Another observation: if $\area (\Delta \cap B) \geq \frac{1}{4}$, $\area((\Delta \cup B) \setminus \Delta_R) \leq \frac{1}{4}$. In this case it is sufficient to 
have a single digon cut into two triangles in order to satisfy the claim.

\begin{figure}
    \centering
    \includegraphics[width=0.5\textwidth]{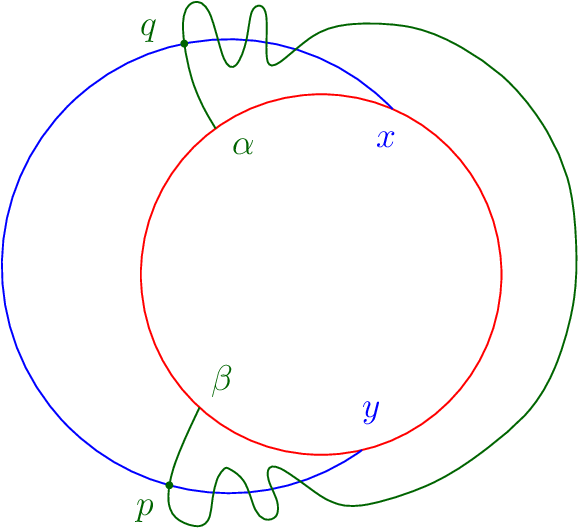}
    \caption{The two essential chords with their endpoints}
    \label{chords_ess_int_fig}
\end{figure}

So we may assume that $\area (\Delta \cap B) < \frac{1}{4}$.

\begin{remark}
  the triangles constructed in this step are embedded in $\Delta \cup B$ but not necessarily lie entirely in $\Delta$ or entirely in $B$. Therefore after 
  projection to $S^2$ they may become immersed.
\end{remark}

\underline{Step IV:}
We continue with the assumptions of the last part of Step III. Denote by $\alpha, \beta$ the endpoints of the green essential chord $\gamma_G$,
by $x, y$ the endpoints of the blue essential chord $\gamma_B$ as in Figure \ref{chords_ess_int_fig}.
Orient $\gamma_G$ from $\alpha$ to $\beta$. Let $q$ be the first and $p$ be the last intersection of $\gamma_G \cap \gamma_B$ (ordered along $\gamma_G$).

In the case there are additional intersection points of the two essential chords (except for $q, p$), they must appear either on $[p, y] \subset \gamma_B$ so that their order along $\gamma_B$ 
coincides with that along $\gamma_G$ or on $[q, x] \subset \gamma_B$ and the orders agree as well. Otherwise there will be intersection points ``trapped'' inside $\gamma_B \cup \gamma_G$ in 
the contrary to simple connectedness of $\Delta \cup B$. In the case such additional intersection points do appear, we call the digons 
bounded by $\gamma_G$ outside $\gamma_B$ \emph{ears} (see Figure \ref{ears_fig}). Once again, since $\Delta \cup B$ is simply connected, no intersections of $L_G \cap L_B$ can occur in
the interior of the region bounded by $\gamma_B, \gamma_R$ (including ``ears'' and $\Delta_R$).

\begin{figure}
    \centering
    \includegraphics[width=0.5\textwidth]{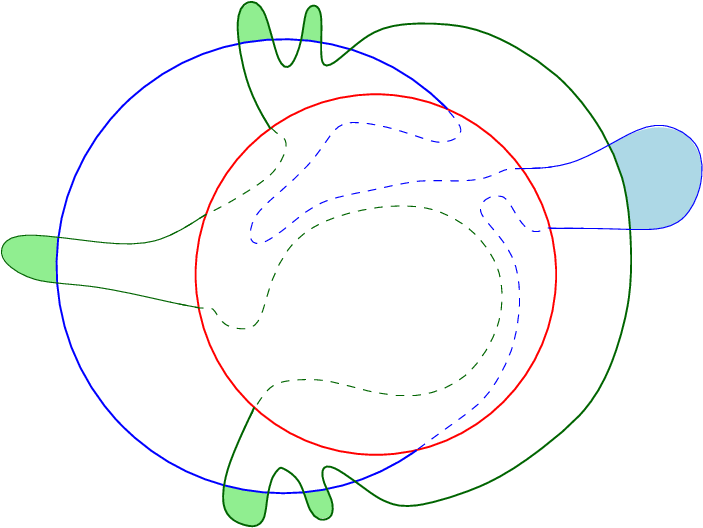}
    \caption{Examples of ears}
    \label{ears_fig}
\end{figure}

We also call an ``ear'' a region trapped between $\gamma_G$ and a non-essential blue chord whose base is nested in the base of $\gamma_G$. 
Such regions (if exist) lie outside $\gamma_G$. In a similar way we call an ear a region trapped between a non-essectial chord of $L_G$ and $\gamma_B$ and which 
touches $\gamma_B$ from outside. Note that all ``ears'' lie outside the digon formed by $\gamma_B$. 

Now we can refine our area estimate: $$\text{digon}(\gamma_B) \subseteq (\Delta \cup B) \setminus \Delta_R \setminus \bigcup \text{ears}.$$ Hence
if $1/2 - \area(\Delta \cap B) - \area (\bigcup \text{ears}) \leq \frac{1}{4}$ $\Leftrightarrow$
$\area(\Delta \cap B) + \area (\bigcup \text{ears}) \geq \frac{1}{4}$, one of the triangles inside the digon of $\gamma_B$ has area at most $\frac{1}{8}$.

\medskip

We assume the contrary, namely, $\area(\Delta \cap B) + \area (\bigcup \text{ears}) < \frac{1}{4}$.

As in Figure \ref{chords_ess_int2_fig}, denote by $p'$ the nearest to $y$ (along $\gamma_B$) intersection point of $\gamma_B \cap \gamma_G$, by $q'$ the same for $x$ ($p', q'$ may coincide with $p, q$).
Consider the triangle formed by $p', \beta, y$. If the arc $[\beta, y] \subset L_R$ has no intersections with $L_G, L_B$, this triangle is contained in $(B \cap \Delta) \cup \bigcup \text{ears}$.
Otherwise, if there exist intersections with $L_B$, extend $[p', y] \subset L_B$ beyond $y$ along $L_B$ till the intersection point $y' \in [\beta, y]$ which is the closest
along $L_R$ to $\beta$. Do the same for $[p', \beta] \subset L_G$ till the intersection $\beta'$ closest to $y'$ in $[\beta, y']$ (see Figure \ref{tria_fig}).

\begin{figure}
    \centering
    \includegraphics[width=0.5\textwidth]{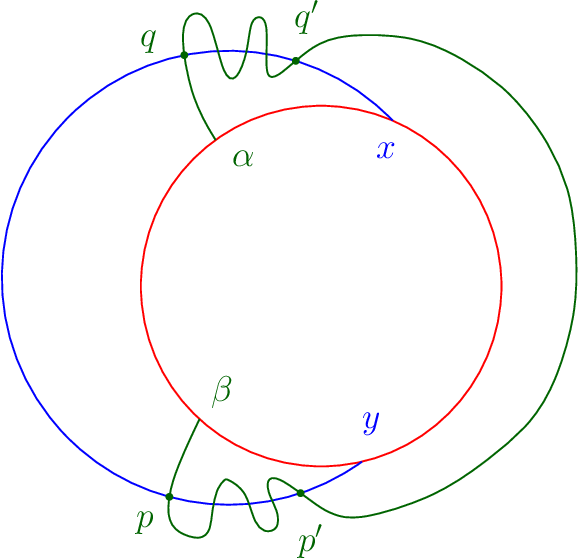}
    \caption{The interstion points $p,p',q,q'$}
    \label{chords_ess_int2_fig}
\end{figure}

\begin{figure}
    \centering
    \includegraphics[width=0.5\textwidth]{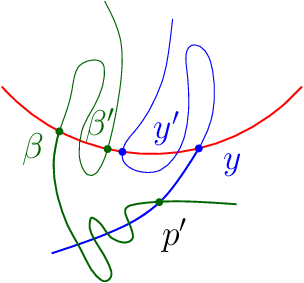}
    \caption{Extension of $y$ and $\beta$ to $y'$ and $\beta'$ respectively}
    \label{tria_fig}
\end{figure}

By this construction, the red side of the triangle $p'y'\beta'$ does not intersect any curve. The green and the blue sides do not intersect one another, hence the triangle is embedded.
However, the green and the blue sides may intersect the essential chords of the oppposite color ($\gamma_B, \gamma_G$, resp.)
In the case of such intersections, the triangle is not contained in $\Delta \cap B$, but the ``extra'' parts consist of ears (see Figure \ref{tria2_fig}).

\begin{figure}
    \centering
    \includegraphics[width=0.5\textwidth]{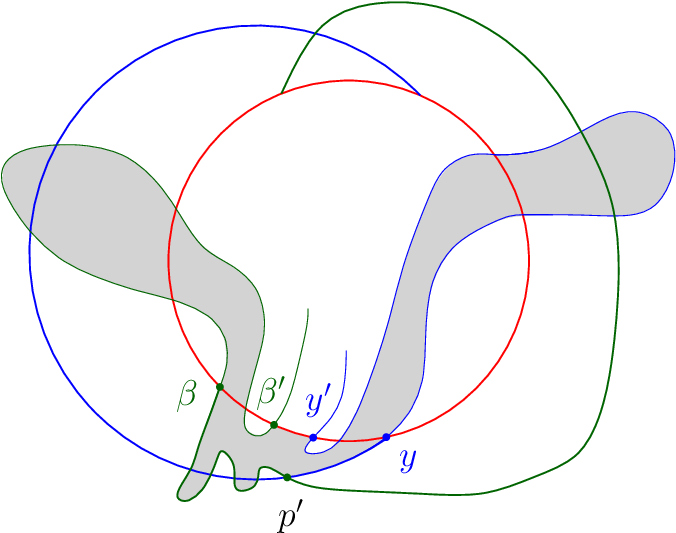}
    \caption{the triangle $p'y' \beta'$ lies inside $\Delta \cup B \cup$ ears}
    \label{tria2_fig}
\end{figure}

Similarly, we construct the symmetric triangle $q'\alpha'x'$. The sides of these two triangles do not overlap and do not intersect, hence they are disjoint. They are contained
in the region $(\Delta \cap B) \cup \bigcup \text{ears}$ which has area less than $\frac{1}{4}$, therefore one of them has area less than $\frac{1}{8}$.

\medskip

\underline{Step V:} We drop the assumption that $L_i$ are equators and consider general median curves. We review the preceding four steps and describe modifications necessary to support the more general setup. 
We note that any two medians intersect due to area considerations in the same way as equators do. 

Step I: denote by $\Delta$ one of the connected components of $S^2 \setminus L_G$ which contains an intersection of $L_R \cap L_B$. As $\area(\Delta) \leq 1/2$, we can use the same argument as in Step I
and continue assuming there are no crossing of any two medians.

Step II: 
$S^2 \setminus L_B$ may consist of several disks. We pick that which contains $R$ and continue with the argument verbatim. 
At the end we lift the regions $R, \Delta, B$ and the medians to the universal cover and conclude that $\widetilde{L_R}$ is contained in the union of the lifted disks $\Delta \cup B$.
Note that both disks $\Delta, B$ may have areas less than $1/2$ in the non-equator case. However $L_R$ is considered as a whole, so in the case it forms a figure eight it bounds area greater than $1/2$.

Step III: if $L_R$ is not embedded, we denote by $\Delta_R$ the union of bounded connected components of its complement. Note that $\Delta_R$ is a family of topological disks glued one to another at a boundary point, or (if we allow for degenerate cases) the topological disks might be glued in several points and along segments, but either way, for any $\varepsilon > 0$, the $\varepsilon$-neighborhood of $\Delta_R$ is a topological disk. 
Hence, since $L_B, L_G$ cannot go through self-intersections of $L_R$, we may effectively replace $\Delta_R$ by its $\varepsilon$-neighborhood and continue the argument as before.
This way we obtain $\area(\Delta), \area(B) \leq 1/2$, $\area(\Delta_R) \geq 1/2$ and the same inequality holds - it may even get improved.

Step IV: taken verbatim.

\end{proof}

\section{The median quasi-state} \label{medianSection}

In this section we prove Theorem \ref{median_theorem}.
First we prove that for $f,g \in C^\infty(S^2)$, 
$$ \frac{| \zeta(f+g) - \zeta(f) - \zeta(g) |^2}{\|\{f,g\}\|} \leq \frac{1}{4} .$$
Then we show an example for $f$ and $g$ such that 
$$ \frac{| \zeta(f+g) - \zeta(f) - \zeta(g) |^2}{\|\{f,g\}\|} \geq \frac{1}{4} - \delta $$
for every $\delta > 0.$

\subsection{Bound on the median quasi-state}
Let $f,g \in C^\infty(S^2)$. Consider the map $\Phi: S^2 \to \R^2$, $\Phi(z) = (f(z),g(z))$.
Let $D = \{(x,y) : x> \zeta(f), y > \zeta(g), x + y < \zeta(f+g)\}$. Note that the (Euclidean) area $A$ of the triangle $D$ satisfies
$$
A = \frac{1}{2} | \zeta(f+g) - \zeta(f) - \zeta(g) |^2.
$$
Consider the medians $m_f,m_g,m_{f+g}$ of $f,g,f+g$ respectively. Every triangle $\Delta \subset S^2$ with boundary on $m_f \cup m_g \cup m_{f+g}$ satisfies that $D \subset \Phi(\Delta)$.
Now
\[ A \leq \int_{\Phi^{-1}(D)} |df \wedge dg| = \int_{\Phi^{-1}(D)} |\{f,g\} \omega| \leq \int_\Delta |\{f,g\} \omega| \leq \area(\Delta) \| \{f,g\} \|. \]
From Proposition \ref{s2triangleProp} there exists a triangle $\Delta$ with $\area(\Delta) \leq \frac{1}{8}$ and hence
\[ \frac{A}{\|\{f,g\}\|} \leq \frac{1}{8}, \]
which implies
\[ \frac{| \zeta(f+g) - \zeta(f) - \zeta(g) |^2}{\|\{f,g\}\|} \leq \frac{1}{4}. \]

\subsection{Sharpness example}
Let $\Sigma$ be a smoothing of the set of 4 triangles of area $\frac{1}{8}$ as described in Figure \ref{triangleFigure}.

\begin{figure}
    \centering
	\includegraphics[width=0.5\textwidth]{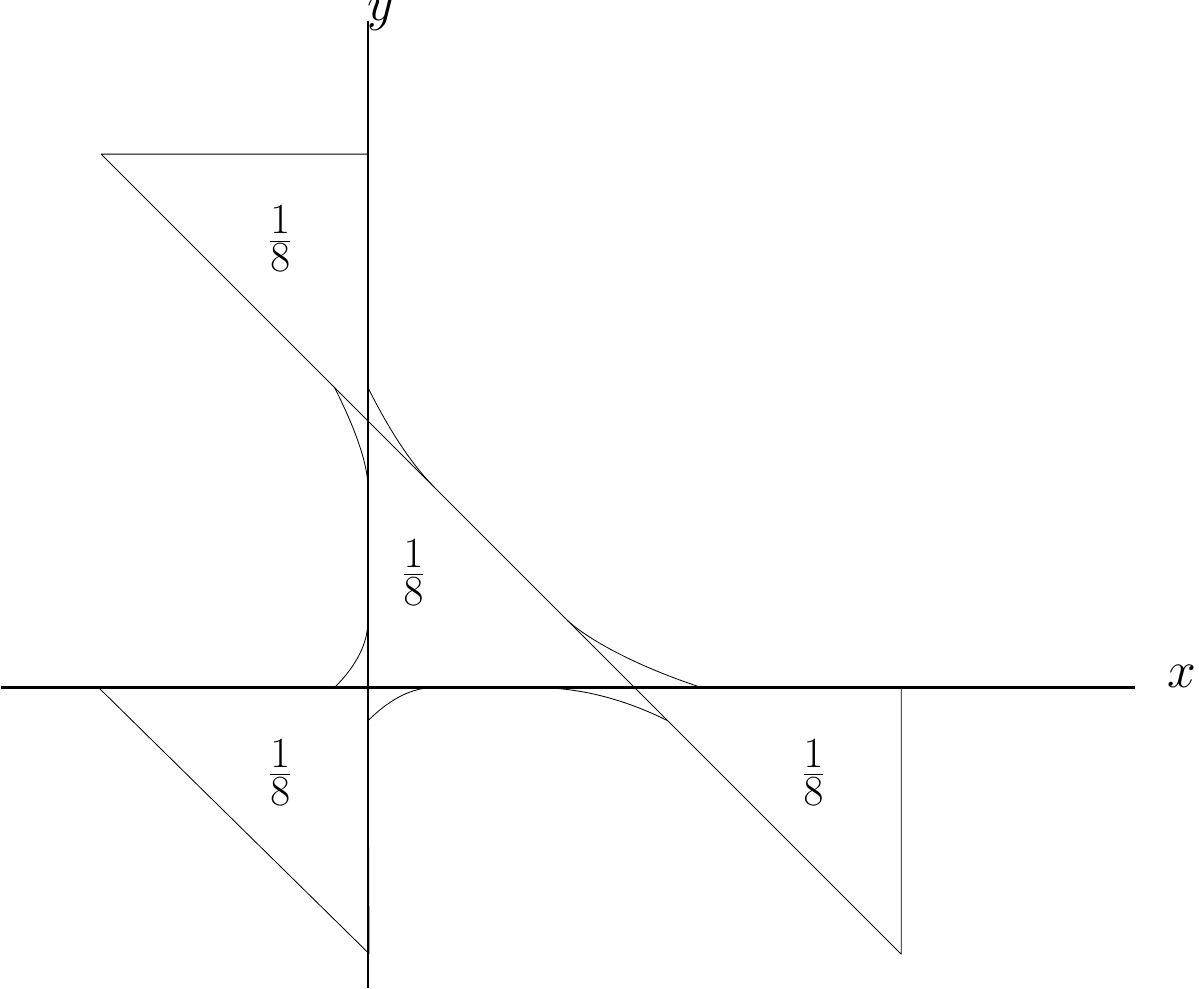}
	\caption{The set $\Sigma$}
	\label{triangleFigure}
\end{figure}

Let $\phi : D^2 \to \Sigma$ be an area preserving map, where $D^2$ is the standard disc of area $1$ and with radius $R = \frac{1}{\sqrt{\pi}}$.
Define the functions $f,g : D^2 \to \R$ by $f(z)$ ($g(z)$) equals the $x$-coordinate ($y$-coordinate) of $\phi(z)$.
One has $\{f,g\} \equiv 1$. 
By gluing two copies of $D^2$ with opposite orientations, define the functions $f_s,g_s : S^2 \to \R$ by taking the values of $f$ and $g$ on each hemisphere. Note that $f_s$ and $g_s$ are not differentiable on the equator.
However, $\{f_s,g_s\} \equiv 1$ on the upper hemisphere outside a neighborhood of the equator, and $\{f_s,g_s\} \equiv -1$ in the lower hemisphere outside a neighborhood of the equator.
Note also that $\zeta(f_s)$ and $\zeta(g_s)$ are arbitrarily close to $0$ and $\zeta(f_s+g_s)$ is arbitrarily close to $\frac{1}{2}$.

Let $h : D^2 \to \R$ be a smooth function. For $z = r e^{i\theta}$, define 
\[\tilde{h}(r,\theta) = h(0) + \int_0^r a(t) \partial_{r} h(t ,\theta) dt,\]
where $a: [0,R] \to \R$ is a cut-off function that satisfies $a |_{[0,R-2\epsilon]} \equiv 1$ and $a |_{[R-\epsilon,R]} \equiv 0$.
For small enough $\epsilon$, $\tilde{h}$ is a $C^0$ approximation of $h$ that satisfies $\partial_r h \equiv 0$ in a neighborhood of the boundary, and moreover one can check that $\| \{\tilde{f},\tilde{g} \}\| \leq \| \{f,g\} \|  + \varepsilon$ for $\varepsilon > 0$ arbitrarily small.

% and moreover
% $$
% \partial_r \tilde{h}(r, \theta)= a(r) \partial_r h(r, \theta),
% $$
% furthermore,
% $$
% |\partial_\theta \tilde{h}- \partial_\theta h| \leq \int_0^1 |1-a(t)| dt \cdot \max_{r,\theta}|\partial_r  \partial_\theta h(r, \theta)| \leq K \|1-a\|_{L^1}, 
% $$
% where $K=K(h) > 0$. By construction, we can make $\|1-a\|_{L^1}$ arbitrarily small. Taking as $h$ our functions $f$ and $g$, we compute
% \begin{align*}
% |\{\tilde{f}, \tilde{g}\}|(r,\theta) &= \frac{1}{r} \left|\left[\partial_r \tilde{f} \cdot \partial_\theta \tilde{g} -\partial_r \tilde{g} \cdot\partial_\theta \tilde{f} \right] \right|(r,\theta) \\
% &= \frac{a(r)}{r} \left|\left[\partial_r {f} \cdot \partial_\theta \tilde{g} -\partial_r {g} \cdot\partial_\theta \tilde{f} \right] \right|(r,\theta) \\
% & \leq \frac{1}{r} \left|\left[\partial_r {f} \cdot \partial_\theta {g} -\partial_r {g} \cdot\partial_\theta {f} \right] \right|(r,\theta) + \frac{1}{r}|\partial_r f| \cdot\left|\partial_\theta \tilde{g} -\partial_\theta g \right| + \frac{1}{r}|\partial_r g| \cdot\left|\partial_\theta \tilde{f} -\partial_\theta f \right| \\
% &\leq |\{f, g\}|(r, \theta) + K(f, g) \cdot \|1-a\|_{L^1}.
% \end{align*}
% Here we used the estimate above and the fact that $\tilde{f} = f$ and $\tilde{g}=g$ for $r$ small enough, so we may assume $r\geq 1/2$ in the last estimate.

Since the $r$-derivative near the boundary is $0$, one gets that $\tilde{f}_s$ and $\tilde{g}_s$ are smooth functions from $S^2$ to $\R$ that satisfy $\| \{ \tilde{f}_s ,  \tilde{g}_s\}\| \leq 1 + \varepsilon$. Since $\tilde{f}_s$ and $\tilde{g}_s$ are $C^0$ approximations of $f_s$ and $g_s$ respectively, one has that $\zeta(\tilde{f}_s)$ and $\zeta(\tilde{g}_s)$ are still arbitrarily close to $0$ and $\zeta(\tilde{f}_s+\tilde{g}_s)$ is arbitrarily close to $\frac{1}{2}$.
Hence for some arbitrarily small $\delta >0$ we get an example where
\[ | \zeta(\tilde{f}_s+\tilde{g}_s) - \zeta(\tilde{f}_s) - \zeta(\tilde{g}_s) |^2 \geq (\frac{1}{4} - \delta) \| \{\tilde{f}_s,\tilde{g}_s\} \| .\]
This proves sharpness.

\bibliography{references}
\bibliographystyle{siam}

\end{document}